\documentclass[a4paper,leqno,12pt,twoside]{amsart}
\usepackage{amsmath,amsfonts,amssymb,amsthm,amscd}
\usepackage[utf8]{inputenc}
\usepackage[T1]{fontenc}
\usepackage[english]{babel}
\usepackage[colorlinks=true,citecolor=blue, urlcolor=blue, linkcolor=blue,pagebackref]{hyperref}
\usepackage[top=1.2in,bottom=1.2in,left=1.in,right=1.in]{geometry} 
\usepackage{graphicx}
\usepackage{paralist}
\usepackage{tabto}
\usepackage{standalone}
\usepackage{tikz}
\usetikzlibrary{matrix}
\usetikzlibrary{arrows}
\usepackage{xfrac}
\usepackage{amsthm, amsmath, amssymb,latexsym}
\usepackage{tikz-cd}
\usepackage[alphabetic,backrefs,lite]{amsrefs}
\usepackage[all]{xy} 

\usepackage{enumerate}
\usepackage{xcolor}
\usepackage{aliascnt}

\newtheorem{theorem}{Theorem}[section]
\newtheorem{lemma}[theorem]{Lemma}
\newtheorem{corollary}[theorem]{Corollary}
\newtheorem{proposition}[theorem]{Proposition}
\newtheorem{remark}[theorem]{Remark}
\newtheorem{fact}[theorem]{Fact}
\newtheorem{definition}[theorem]{Definition}

\newtheorem{question}[theorem]{Question}
 
\newtheorem*{theorem*}{Theorem}
\newtheorem*{THA}{Theorem A}
\newtheorem*{THB}{Theorem B}
\newtheorem*{THC}{Theorem C}
\newtheorem*{THD}{Theorem D}

\newcommand{\nc}{\newcommand} 
\nc{\cH}{{\mathcal H}}
\nc{\cA}{{\mathcal A}}
\nc{\cG}{{\mathcal G}}
\nc{\cC}{{\mathcal C}}
\nc{\cD}{{\mathcal D}}
\nc{\cO}{{\mathcal O}}
\nc{\cI}{{\mathcal I}}
\nc{\cB}{{\mathcal B}}
\nc{\cY}{{\mathcal Y}}
\nc{\cK}{{\mathcal K}} 
\nc{\cX}{{\mathcal X}}
\nc{\cS}{{\mathcal S}}
\nc{\cE}{{\mathcal E}}
\nc{\cF}{{\mathcal F}}
\nc{\cZ}{{\mathcal Z}}
\nc{\cQ}{{\mathcal Q}}
\nc{\cN}{{\mathcal N}}
\nc{\cP}{{\mathcal P}}
\nc{\cL}{{\mathcal L}}
\nc{\cM}{{\mathcal M}}
\nc{\cT}{{\mathcal T}}
\nc{\cW}{{\mathcal W}}
\nc{\cU}{{\mathcal U}}
\nc{\cJ}{{\mathcal J}}
\nc{\cV}{{\mathcal V}}
\nc{\bH}{{\mathbb H}}
\nc{\bA}{{\mathbb A}}
\nc{\bG}{{\mathbb G}}
\nc{\bC}{{\mathbb C}}
\nc{\bO}{{\mathbb O}}
\nc{\bI}{{\mathbb I}}
\nc{\bB}{{\mathbb B}}
\nc{\bY}{{\mathbb Y}}
\nc{\bK}{{\mathbb K}} 
\nc{\bX}{{\mathbb X}}
\nc{\bS}{{\mathbb S}}
\nc{\bE}{{\mathbb E}}
\nc{\bF}{{\mathbb F}}
\nc{\bZ}{{\mathbb Z}}
\nc{\bQ}{{\mathbb Q}}
\nc{\bN}{{\mathbb N}}
\nc{\bP}{{\mathbb P}}
\nc{\bL}{{\mathbb L}}
\nc{\bM}{{\mathbb M}}
\nc{\bT}{{\mathbb T}}
\nc{\bW}{{\mathbb W}}
\nc{\bU}{{\mathbb U}}
\nc{\bD}{{\mathbb D}}
\nc{\bJ}{{\mathbb J}}
\nc{\bV}{{\mathbb V}}
\nc{\bbZ}{{\mathbb Z}}
\nc{\bR}{{\mathbb R}}
\nc{\fr}{{\rightarrow}}
\nc{\co}{{\nabla}}
\nc{\Pic}{{\mbox{Pic}}}

\nc{\cu}{{\barline{\nabla}}}

\title{On the maximal variation problem and Lefschetz pencils}

 \author{D. Bricalli, G.P. Pirola}
 
\address{Davide Bricalli  \\ Universit\`a degli Studi di Pavia  \\ Dipartimento di Matematica \\ Via Ferrata 1  \\ 27100 Pavia, Italy}
\email{davide.bricalli@unipv.it \ bricalli@altamatematica.it}

 \address{Gian Pietro Pirola  \\ Universit\`a degli Studi di Pavia  \\ Dipartimento di Matematica \\ Via Ferrata 1  \\ 27100 Pavia, Italy  }
 \email{gianpietro.pirola@unipv.it}

\begin{document}
\thanks{
\textit{Keywords}: Maximal variation, Hodge theory, Lefschetz pencils, Lefschetz properties, hypersurfaces, Enriques surfaces.}

\subjclass[2020]{Primary: 14J10; Secondary: 14D05, 14H10, 13E10.}

\begin{abstract}
We study the maximal variation problem for linear systems associated with a very ample line bundle, using Hodge theory and Picard–Lefschetz theory. We provide an affirmative answer to the maximal variation problem for a broad class of smooth projective varieties. This includes varieties $X$ of dimension $n\geq2$ with $p_g=h^{n,0}(X)>0$ and $H^{n-1,0}(X)=\{0\}$, Enriques surfaces, irregular surfaces with maximal Albanese dimension, smooth hyperk\"ahler varieties, and all the smooth not Fano hypersurfaces in $\bP^n$. As a consequence, by a result of Beauville, we establish a Lefschetz property for the Jacobian rings of smooth hypersurfaces in $\bP^n$ of degree $n+1$.

\end{abstract}

\maketitle

\section*{Introduction}

Let $\pi:\cZ\rightarrow U$ be a smooth family of varieties and let $Y=\pi^{-1}(u)$, for $u\in U$, be its general member. Following \cite{DH}, we say that $\pi$ has maximal variation if only finitely many elements of the family are isomorphic to $Y$. In other words, whenever a moduli space $\cM$ of elements of the family exists, the map $U\rightarrow \cM$ is generically finite. Here, we consider families of smooth hyperplane sections of a smooth projective complex variety $X$. Given a very ample line bundle $L$ on a smooth projective variety $X$, we say that the linear system $|L|$ has maximal variation if the corresponding family of smooth hyperplane sections does. This happens, for instance, if there doesn't exist a variety parametrizing an isotrivial family of hyperplane sections of $X$ and passing through the general point of $\bP(H^0(X,L))$. Moreover, we say that the variety $X$ itself has maximal variation if every very ample line bundle on $X$ does (see Definition \ref{DEF:maximalvariationofL}). \\
In particular, understanding how hyperplane sections vary in moduli is a classical problem in algebraic geometry, closely related to deformation theory and Hodge theory. As recalled by Beauville in \cite{Beau2}, there is an apparently simple differential criterion which ensures the maximal variation property for a linear system $|L|$: if the tangent bundle of $X$ restricted to a general element $Y$ of $|L|$ has no sections, i.e. $H^0(Y,T_{X|Y})=0$, then $|L|$ has maximal variation. This is the classical infinitesimal approach: in this direction many results have been proved dealing for example with the case of K3 surfaces (see \cite{DH} and \cite{Bak}) or in the case of abelian varieties (see \cite{Beau2}). In our opinion, this turns out to be more delicate and more challenging to prove than it might initially appear: the maximal variation problem for linear systems is still open for many examples, despite the expectation that stronger results should hold. For instance, in \cite{Sch} Chad Schoen, using Hodge theory, proved that if $X$ is a very general hypersurface in $\bP^n$, with $n\geq3$, of degree $d\geq n+2$, then it can't be covered by a product of varieties. So, in this case, the corresponding map on the moduli of hyperplane sections is actually finite, and not just generically finite.
\medskip

We tackle this problem from a Hodge theoretic point of view. With the use of Picard-Lefschetz theory, we consider the case of varieties with positive geometric genus and we prove the following:
\begin{THA}[Theorem \ref{THM:main}]
    Let $X$ be a smooth complex projective variety of dimension $n\geq2$, embedded in the projective space $\bP(V^*)$, where $V=H^0(X,L)$ is the space of global sections of a very ample line bundle $L$ over $X$. Assume that:
    \begin{enumerate}
        \item $p_g=\dim(H^{n,0}(X))>0$;
        \item $H^{n-1,0}(X)=0$.
    \end{enumerate}
    Then there is no isotrivial subvariety passing through the general point of $\bP(V)$. In particular, $X$ has maximal variation.
\end{THA}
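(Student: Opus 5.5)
We argue by contradiction. Suppose $T\subset \bP(V)$ is an irreducible subvariety through a general point whose members $Y_t$, $t\in T$, are smooth and pairwise isomorphic. After a finite base change $T'\to T$ and a Zariski open restriction, the family $\cY\to T'$ becomes trivial in the analytic (indeed \'etale) sense. Concretely, over an open subset we get isomorphisms $\phi_t:Y_{t_0}\to Y_t$ that depend holomorphically on $t$. Since $Y_{t_0}$ has finite automorphism group modulo a fixed algebraic group, the monodromy of the local system $R^{n-1}\pi_*\bC$ restricted to $T'$ acts on $H^{n-1}(Y_{t_0})$ through a finite group, and the variation of Hodge structure on $T'$ is constant. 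We would make this precise via the period map.

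The Hodge-theoretic input comes from $p_g>0$. By Griffiths' residue/adjunction description, a nonzero $\omega\in H^0(X,K_X)$ and the sections of $L$ give classes in $H^{n-1,0}(Y_t)$. More precisely, $H^0(Y_t,K_X\otimes L|_{Y_t})\to H^{n-1,0}(Y_t)$ is surjective, and, using $H^{n-1,0}(X)=0$, the restriction $H^{n-1,0}(X)\to H^{n-1,0}(Y_t)$ is zero. Hence the whole holomorphic part of $H^{n-1}(Y_t)$ lies in the vanishing cohomology $H^{n-1}(Y_t)_{\mathrm{van}}$.

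We then take a Lefschetz pencil through the general point, transverse to $T$, or better choose a line in $\bP(V)$ meeting $T$ and the discriminant generically. The plan is to show that the vanishing part carries nontrivial variation along $T$: the infinitesimal variation $H^{n-1,0}(Y)\otimes T_tT\to H^{n-2,1}(Y)$ should be nonzero. Write $Y=\{s=0\}$ with $T_tT\ni \dot s\in V/\langle s\rangle$. The derivative of the residue class of $\omega\otimes \sigma$ is essentially the class of $\omega\,\sigma\dot s$ in $H^{n-2,1}(Y)$, computed via the coboundary $H^0(Y,K_X(2L)|_Y)\to H^1(Y,\Omega^{n-2}_Y)$. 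Isotriviality forces this to vanish for all $\omega,\sigma$ and all $\dot s\in T_tT$.

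The main obstacle is converting this vanishing into a contradiction. I would first try Picard--Lefschetz. Isotriviality implies that the monodromy group on $H^{n-1}_{\mathrm{van}}$ is finite along $T'$, and the family is trivial over $T'$. Choose a general Lefschetz pencil $\ell$ through $t\in T$, sweep $\ell$ via the group action, and use the irreducibility of monodromy on vanishing cohomology (Lefschetz). The vanishing part spanned by the Hodge classes $H^{n-1,0}$, which is nonzero because $p_g>0$ gives nonzero $\omega\sigma$ restrictions, generates all of $H^{n-1}_{\mathrm{van}}$ as a monodromy module. Finiteness of monodromy of the big Lefschetz family is impossible, since Picard--Lefschetz transvections have infinite order when $n-1$ is even or generate infinite groups otherwise. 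Hence the variation along the $T$ directions must be nontrivial. We transfer this from pencil directions to $T$-directions using the fact that $T$ passes through a general point, so $T_tT$ is general in a homogeneous sense. Finally, the implication ``no isotrivial subvariety through the general point $\Rightarrow$ maximal variation'' is formal: if $|L|$ lacked maximal variation, the fibres of $U\to\cM$ would be positive-dimensional isotrivial subvarieties through general points.
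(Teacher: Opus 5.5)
There is a genuine gap at the decisive step, the passage from Picard--Lefschetz to variation along $T$. Isotriviality constrains only the directions of $T$: the period map is constant there, and, as you correctly note, $\bar\nabla$ vanishes on $T_tT$. The infinite, irreducible monodromy of a Lefschetz pencil through $t$ lives in a \emph{transverse} direction and is perfectly compatible with constancy along $T$. So ``the big family cannot have finite monodromy, hence $T$ must vary'' is a non sequitur. The proposed transfer via ``$T_tT$ is general in a homogeneous sense'' also fails. Passing through a general point does not make a subvariety's tangent directions general; isotrivial directions are special by nature (e.g.\ tangent to $\mathrm{Aut}(X)$-orbits in $|L|$).

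A sanity check confirms the argument cannot be right. You use $p_g>0$ only to ensure $H^{n-1,0}(Y)\neq0$. For $X=\bP^2$ with $L=\cO(4)$, every step goes through verbatim: $H^{1,0}(X)=0$, $H^{1,0}(Y)\neq 0$ lies entirely in vanishing cohomology, and pencil monodromy is irreducible and infinite. Yet the $8$-dimensional $\mathrm{PGL}_3$-orbits are isotrivial subvarieties through every point of $|L|$. Any correct proof must use $p_g>0$ essentially and globally. (Your final step, from no isotrivial subvariety to maximal variation, is fine.)

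The missing idea, which is how the paper proceeds, is to use $\omega\in H^{n,0}(X)$ to couple the isotrivial direction with the pencil.
\begin{enumerate}
\item After base change, an isotrivial curve through a general point gives a dominant, generically finite rational map $\psi_b:C_b\times Y_b\dashrightarrow X$, so $\psi_b^*\omega\neq0$.
\item By K\"unneth, $H^{n,0}(Y_b)=0$ and $H^{n-1,0}(X)=0$, so $\psi_b^*\omega$ lies in $H^{1,0}(C_b)\otimes\bH_b^{n-1,0}$. Its $H^{n-1,0}(Y_b)$-components are, up to sign, the adjunction classes $\omega\cdot\dot s$ with $\dot s$ tangent to the isotrivial curve, i.e.\ your classes $\omega\sigma$ with $\sigma=\dot s$. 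So you had the right objects, but no mechanism forcing them to vanish.
\item The paper builds a surface in $\bP(V)$ containing a Lefschetz pencil $\bL$ and swept out by isotrivial curves, so that $b$ moves along a curve $B\to\bL$. Since $\psi_b^*\omega$ is pulled back from $X$, it is invariant under monodromy lifted from loops around critical points of $\bL$.
\item Write $\psi_b^*\omega=\sum a_{ij}\alpha_i\otimes\beta_j$ and apply Picard--Lefschetz on the $\bH_b$ factor. This gives $0=\Theta+\Gamma\otimes\delta$ with $\Theta\in H^1(C_b)\otimes\bH_b^{n-1,0}$.
\item The vanishing cycle $\delta$ is real, so it cannot lie in $\bH_b^{n-1,0}$; this forces $\Gamma=0$, i.e.\ $\delta\cdot\sum_j a_{ij}\beta_j=0$ for every vanishing cycle. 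Nondegeneracy of the intersection form on $\bH_b$ then gives $\psi_b^*\omega=0$, a contradiction.
\end{enumerate}
None of this structure appears in your proposal: the product map, the monodromy-invariant class mixing $H^1(C_b)$ with $\bH_b$, and the Hodge-type argument on $\Gamma\otimes\delta$.
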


By an {\it{isotrivial subvariety}} we mean (see Definition \ref{DEF:isotrivial}) a connected subvariety $Z$ of positive dimension in $\bP(V)$ such that the induced family of hyperplane sections of $X$ is isotrivial on an open subset of $Z$. Under these assumptions, we show, in particular, that there is no positive-dimensional direction in $\bP(V)$ along which the isomorphism class of the corresponding hyperplane sections remains constant on a Zariski open subset.\\ 
Roughly speaking, the main point is the following: given a family of rational dominant maps $C\times Y\dashrightarrow X$, where $Y$ is the general hyperplane section of $X$ and $C$ is a curve, we look at the pull-back of top-forms from $X$. This gives some invariant part of the cohomology of the product $C\times Y$ and such a behavior cannot interact with the vanishing Hodge structure of $X$. In particular, this is not possible if the purely Hodge theoretic assumptions in the statement of Theorem A hold. More precisely, we assume, by contradiction the existence of an isotrivial family passing through the general point of $\bP(V)$, where $V$ is the space of global sections $H^0(X,L)$ of a very ample line bundle $L$ on $X$. After a base change, we can then consider the dominant and generically finite rational map $\psi:C\times Y\dashrightarrow X$. We focus then on the induced injective pull-back $\psi^*$ at the level of top forms. After applying the Künneth decomposition to $H^n(C\times Y)$, by the use of the Hodge theoretical assumptions, we are able to identify the cohomological summand in which the image $\psi^*(H^{n,0}(X))$ must lie, namely the one defined by the vanishing cohomology of the general hyperplane section. The monodromy and the Picard-Lefschetz formula come then into the picture, allowing us to prove that the map $\psi^*$ has to be the zero map, yielding a contradiction. In the same way, one can actually prove that in the assumptions of the above Theorem neither a birationally isotrivial subvariety (see Definition \ref{DEF:isotrivial}) passing through the general point of $\bP(V)$ can exist: in other words if $Y$ is a general hyperplane section of $X$, for a variety $X$ as in the Theorem A, then there are at most finitely many sections which are birational to $Y$.\\
For example, this conclusion holds for smooth hyperk\"ahler varieties and for a smooth complex projective surface $S$ with $p_g(S)>0$ and $q(S)=0$.
\smallskip

From our techniques, we then recover the result on the K3 surfaces in the case of a very ample line bundle (see \cite{DH}, \cite{Bak}, \cite{ABS}, \cite{CG}, \cite{CD}, \cite{CDS},...) but at first sight the case of Enriques surfaces (studied for example in \cite{CDGK}) seems to be out of our control. However, by lifting the problem on the corresponding K3 surface and exploiting the properties of the vanishing cohomology arising from the associated étale $2:1$ cover (studied and described in \cite{LP}), we prove the prove the maximal variation property also for smooth Enriques surfaces. With completely different techniques we deal also with the case of smooth projective irregular surfaces with maximal Albanese dimension:

\begin{THB}[Theorem \ref{THM:Enriques} and Corollary \ref{COR:maxvarirrsurf}]
   Smooth Enriques surfaces and smooth projective surfaces with maximal Albanese dimension have maximal variation.    
\end{THB}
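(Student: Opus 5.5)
We prove the two parts separately, following the same contradiction scheme as for Theorem A. Suppose $|L|$ is very ample on $X$ and some isotrivial (or birationally isotrivial) subvariety passes through the general point of $\bP(V)$, $V=H^0(X,L)$. After a base change we get a curve $C$ and a dominant, generically finite rational map $\psi: C\times Y\dashrightarrow X$, where $Y$ is a general smooth hyperplane section (a smooth curve, since $X$ is a surface) and $\psi(\{c\}\times Y)=Y_c$ is the member of the family over $c$. Pull-back on top forms, $\psi^*:H^{2,0}(X)\to H^{2,0}(C\times Y)=H^{1,0}(C)\otimes H^{1,0}(Y)$, is injective, and more generally $\psi^*$ is injective on $H^2(X)$ modulo the part killed by finite maps, which is zero.

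\emph{Enriques surfaces.} Here $p_g=0$, so the argument cannot be applied to $X$ directly. Let $\pi:\tilde X\to X$ be the étale K3 double cover and $\tilde L=\pi^*L$, which is very ample with $h^0(\tilde L)=2h^0(L)$. The general $Y\in|L|$ lifts to an étale double cover $\tilde Y=\pi^{-1}(Y)$, connected since $Y$ is ample. Pulling back the family, we obtain $\tilde\psi:\tilde C\times\tilde Y\dashrightarrow\tilde X$ (after a further base change, to trivialize the lift of the isomorphisms compatibly with the covering involution $\iota$). Then $\tilde\psi^*\sigma\neq0$ for the K3 form $\sigma$, and $\iota^*\sigma=-\sigma$. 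Write $\tilde\psi^*\sigma=\sum\alpha_i\otimes\beta_i$ in $H^{1,0}(\tilde C)\otimes H^{1,0}(\tilde Y)$. Since $H^1(\tilde X)=0$, each $\beta_i$ restricted to the fibers must lie in the vanishing cohomology of $\tilde Y$ relative to the subfamily of $|L|$-pullbacks; anti-invariance under $\iota$ forces it into the $(-1)$-eigenspace (Prym part). Here we use \cite{LP}: the vanishing cohomology of $\tilde Y$ coming from Lefschetz pencils in $|L|$ generates the anti-invariant part, on which monodromy acts irreducibly by Picard–Lefschetz reflections in the anti-invariant classes $\delta-\iota\delta$. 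Isotriviality means monodromy acts on $H^1(\tilde Y)$ through a finite group fixing the image of $\tilde\psi^*$, so some nonzero anti-invariant class is monodromy-invariant. Irreducibility together with nontriviality of the reflections then gives the contradiction $\tilde\psi^*\sigma=0$. The main obstacle is this step: checking that the pullback of the isotrivial structure to $\tilde X$ stays inside the monodromy analysis of the restricted (non-complete) linear system $\pi^*|L|\subset|\tilde L|$, which is exactly where \cite{LP} must be invoked.

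\emph{Maximal Albanese dimension.} Here $H^{1,0}(X)\neq0$, so the vanishing-cycle argument fails, and we use the Albanese map $a:X\to A$, which is generically finite onto its image. An isotrivial family gives, for general $c$, curves $Y_c$ all isomorphic to $Y$ via $\psi_c$. The compositions $a\circ\psi_c: Y\to A$ then form a positive-dimensional family of maps from a fixed curve to an abelian variety. By rigidity, such maps differ only by translations, $a\circ\psi_c=a\circ\psi_{c_0}+t(c)$, so $a(Y_c)=a(Y_{c_0})+t(c)$. Hence the curves $a(Y_c)$, which sweep out the surface $a(X)$, are translates of a single curve $D$. This forces $a(X)$ to be invariant under translations by the curve $t(C)$, so $a(X)$ is an abelian surface or the sum of two curves $D+t(C)$. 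In either case the classes $[Y_c]=L$ are translates of a fixed divisor on a surface fibred/isogenous-covered by a product, and $Y$ is algebraically equivalent to itself under the translation. We then reduce to Theorem A–style reasoning, or to Beauville's result \cite{Beau2} for abelian varieties: on the product or abelian surface, $H^0(Y,T_{X|Y})=0$ for very ample $L$ except when $L$ is pulled back from a curve factor. That case is excluded because $L$ is very ample and $a$ is generically finite. A secondary subtlety is handling the finite map $X\to a(X)$ when $a$ is not an embedding.
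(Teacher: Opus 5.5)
Both halves of your proposal stop before the actual contradiction.

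\emph{Enriques surfaces.} Lifting to the K3 cover is right. Your extra base change, which makes the covering involution act as $\mathrm{id}\times\iota_Y$ on $C\times\tilde Y$, legitimately puts $\tilde\psi^*\sigma$ in $H^{1,0}(C)\otimes H^{1,0}(\tilde Y)^-$. This even spares you the paper's separate treatment of the summand $H^{1,0}(\cC)^-\otimes(\bH^{1,0})^+$. But the decisive step is missing. The finite monodromy of the isotrivial family over $C$ plays no role. What is needed is the Picard--Lefschetz monodromy of a Lefschetz pencil $\bL\subset|L|$ through the general point. To make it act on $\tilde\psi^*\sigma$, you must first spread the isotrivial curves into a one-parameter family over a curve $B\to\bL$ (the paper's surface $\cD\supset\bL$ swept out by isotrivial curves). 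Then $b\mapsto\tilde\psi_b^*\sigma\in H^1(C_b)\otimes H^1(\tilde Y_b)$ is a flat section. That monodromy acts on \emph{both} tensor factors, so you get an invariant tensor, not a monodromy-invariant anti-invariant class. The paper concludes with the two-node formula $T(\beta)=\beta+\frac{k_s}{2}(\beta\cdot\tilde\gamma_s)\tilde\gamma_s$, where $\tilde\gamma_s=\gamma_{1s}-\gamma_{2s}$, together with a Hodge-type argument. In $0=\Theta+\Gamma\wedge\tilde\gamma_s$, $\Theta$ has type $(1,0)\otimes(1,0)$ and $\tilde\gamma_s$ is real, so $\Gamma=0$, i.e.\ $\tilde\gamma_s\cdot\sum_j a_{ij}\beta_j=0$ for all $s$. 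Irreducibility of monodromy on $\bH^-$ then kills all $a_{ij}$. Alternatively, the span $I$ of the second tensor factors is stable under each such $T$. Since $T(w)-w\in\bC\tilde\gamma_s$ and a nonzero real class cannot lie in $(\bH^{1,0})^-$, one gets $I\perp\tilde\gamma_s$ for all $s$, so $I=0$. You flag this as ``the main obstacle'' but never supply it.

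\emph{Maximal Albanese dimension.} The rigidity step $a\circ\psi_c=a\circ\psi_{c_0}+t(c)$ matches the paper, but nothing after it yields a contradiction. It is entirely possible that $a(X)=D+t(C)$ is a sum of two curves, and this does not make $a(X)$ invariant under translation by $t(C)$. Algebraic equivalence is useless here: on $C_1\times C_2$ the curves $\{p\}\times C_2$ form an isotrivial, algebraically equivalent family. Your fallback to Theorem A or to Beauville's result for abelian varieties does not apply to $X$. Indeed, $q(X)\geq2$ violates $H^{1,0}(X)=0$, $X$ is neither abelian nor a product, and $a$ need not be birational onto a smooth image. The vanishing of $H^0(Y,T_{X|Y})$ you quote concerns a different surface and line bundle. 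The missing idea is that the $Y_c$ lie in \emph{one linear system}. Hence the $1$-cycles $a_*Y_c$, which are translates of $Z=a_*Y_{c_0}$ by $t(c)-t(c_0)$, are rationally equivalent on $A$. Since $Y$ is ample, $a(Y)$ generates $A$, and the paper's Lemma applies. Choose $E$ of dimension $q-2$ with $\Theta=Z+E$ ample. Then $Z_t\sim Z$ gives $\Theta_t\sim\Theta$, so $t\in K(\cO_A(\Theta))$, a finite group. Hence $t$ is constant and $a(X)=\bigcup_c a(Y_c)$ is a curve, contradicting maximal Albanese dimension.
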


One can actually observe that with the same approach, the result holds also for smooth projective vareities (of higher dimension) with maximal Albanese dimension. On the other hand, the case of surfaces with Albanese dimension $1$ can't be, at first sight, naturally deduced from our techniques. Actually, in this case there are some examples of surfaces not having maximal variation: specifically, there exist surfaces with vertical vector fields (studied for example in \cite{CFGLS}).
\medskip

Let us now consider the case of a smooth projective hypersurface $X=V(f)$ in $\bP^n$: Beauville in \cite{Beau} proved an interesting equivalence between the maximal variation of $|\cO_X(1)|$ and the validity of a Lefschetz property in the Jacobian ring of $f$. In particular, he proves that $|\cO_X(1)|$ has maximal variation if and only if $R_f=\bC[x_0,\dots,x_n]/J_f=\bigoplus_iR_f^i$ (where $J_f$ is the Jacobian ideal of $f$) satisfies the Weak Lefschetz Property in degree $d-1$: this means that for a general element $x\in R^1_f$ the multiplication map $\cdot x:R^{d-1}_f\rightarrow R^d_f$ is injective. Such a property, as the Weak Lefschetz Properties in all the other degrees, is conjectured to hold for all the rings which are quotients of $\bC[x_0,\dots,x_n]$ over an ideal generated by a regular sequence of homogeneous polynomials (one can see \cite{bookLef} or \cite{JM}). As recalled in \cite{Beau2}, given a smooth hypersurface $X\subset\bP^n$ of degree $d\geq3$ with $n\geq3$, the maximal variation of $|\cO_X(1)|$ holds in the following cases:
\begin{itemize}
    \item if $X$ is of degree $d\geq n+2$ (partially proved in \cite{PRT} and then improved in \cite{BMR}),
    \item if $X$ is a quartic surface in $\bP^3$ (the corresponding weak Lefschetz property has been proved in \cite{BMMN}),
    \item if $X$ is a cubic threefold in $\bP^4$ (from the validity of the corresponding Lefschetz property, proved in \cite{AR}, as then in \cite{BFP}),
    \item if $X$ is general (the validity of the Lefschetz property, in any degree, is known to hold in the general case).
\end{itemize} 

However, infinitely many cases are still not covered by the results above: infinitely many hypersurfaces for which it is not known whether the above Lefschetz property or, equivalently, the maximal variation property holds. From our main theorem (Theorem A), we can improve the above results:
\begin{THC}[Theorem \ref{THM:hypersurfaces}]
    A smooth projective not Fano hypersurface $X$ in $\bP^n$ (i.e. such that $\deg(X)\geq n+1$), with $n\geq3$, has maximal variation.\\
    Moreover, if $X$ is a smooth complete intersection in $\bP^n$ of dimension at least $2$ of type $(d_1,\dots,d_r)$ such that $\sum_i d_i\geq n+1$, then $X$ has maximal variation.
\end{THC}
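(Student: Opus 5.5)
Theorem C will be deduced from Theorem A (Theorem \ref{THM:main}) by checking its two Hodge-theoretic hypotheses for every very ample line bundle. Write $X\subset\bP^n$ for a smooth complete intersection of type $(d_1,\dots,d_r)$, of dimension $m=n-r\geq 2$. The hypersurface case is $r=1$, $m=n-1\geq2$. The plan is:
\begin{enumerate}
\item compute $p_g(X)$ via adjunction;
\item show $H^{m-1,0}(X)=0$ via the Lefschetz hyperplane theorem;
\item check that these conditions, proved for $X$ itself, hold for every very ample $L$ and not just $\cO_X(1)$.
\end{enumerate}

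For the first point, adjunction gives $K_X=\cO_X(\sum_i d_i-n-1)$. By hypothesis $\sum_i d_i\geq n+1$, so $K_X=\cO_X(k)$ with $k\geq0$. Since $X$ is projectively normal (it is a complete intersection), $H^0(X,\cO_X(k))$ is a quotient of the degree-$k$ forms on $\bP^n$ and contains the nonzero constants. Hence $p_g=h^0(K_X)\geq1$.

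For the second point, the Lefschetz hyperplane theorem, applied successively along the chain of complete intersections, gives $H^j(X,\bC)\cong H^j(\bP^n,\bC)$ for $j<m$. The case $j=m-1$ is the one we need. Its Hodge decomposition must agree with that of $\bP^n$, which is pure of type $(p,p)$. So $H^{m-1,0}(X)=0$ when $m-1\geq1$, which is exactly $m\geq2$. One can alternatively use the Koszul resolution and the vanishing of $H^i(\bP^n,\cO(a))$ for $0<i<n$ to get $H^{i}(X,\cO_X)=0$ for $0<i<m$, and then Hodge symmetry.

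For the third point, $p_g$ and $h^{m-1,0}$ are intrinsic to $X$ and do not depend on $L$. Theorem A, applied to every very ample line bundle $L$ on $X$, then says that no isotrivial subvariety passes through the general point of $\bP(H^0(X,L))$. By Definition \ref{DEF:maximalvariationofL}, $X$ has maximal variation. The hypersurface statement is the special case $r=1$, where $\deg X\geq n+1$ and $n\geq3$ gives $m\geq2$.

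There is essentially no obstacle, since all the difficulty sits in Theorem A. The only point needing care is the Lefschetz step in the weighted setting. For $m=2$ we need $H^1(X)=0$, i.e. $q=0$; this follows from $\pi_1(X)=1$ for complete intersections of dimension $\geq2$, or from the Koszul cohomology computation above. I would present the Koszul computation as the uniform argument.
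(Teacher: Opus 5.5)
Your proposal is correct and matches the paper's route: the paper simply notes that adjunction gives $K_X=\cO_X(\sum_i d_i-n-1)$ effective, so $p_g>0$, while $H^{m-1,0}(X)=0$ by Lefschetz, and then Theorem A applies to every very ample $L$. One small slip: for $k>0$ the space $H^0(X,\cO_X(k))$ does not contain the constants, but it is still nonzero, since e.g.\ $x_0^k$ does not vanish identically on $X$, so $p_g\geq1$ holds anyway.
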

We thus contribute to give a positive answer to a question posed by Harris, Mazur, and Pandharipande in \cite{HMP}, dealing with the maximal variation of hyperplane sections of hypersurfaces. Moreover, by the equivalence proved by Beauville in \cite{Beau}, we obtain, as a corollary, the following:

\begin{THD}[Corollary \ref{COR:lefschetzproperties}]
    Let $X=V(f)\subset\bP^n$, with $n\geq3$, be a smooth hypersurface of degree $d=n+1$. Then the associated Jacobian ring $R_f=\bigoplus_{i=0}^{(n+1)(d-2)}R^i_f$ satisfies the Weak Lefschetz Property in degree $n=d-1$, i.e. if $x\in R^1_f$ is a general element then the multiplication map 
    $$\cdot x:R^{n}_f\rightarrow R^{n+1}_f \qquad {\mbox{is injective}}.$$
\end{THD}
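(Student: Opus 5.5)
This corollary follows from Theorem C combined with Beauville's equivalence from \cite{Beau}. The plan is to check that the hypotheses line up exactly and then transport the conclusion.

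First, let $X=V(f)\subset\bP^n$ with $n\geq3$ and $\deg f=d=n+1$. Then $X$ is a smooth hypersurface of dimension $n-1\geq2$. Adjunction gives $K_X=\cO_X(d-n-1)=\cO_X$, so $X$ is Calabi--Yau and in particular not Fano. Theorem C (Theorem \ref{THM:hypersurfaces}) therefore applies, and $X$ has maximal variation. In particular the very ample linear system $|\cO_X(1)|$ has maximal variation.

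Alternatively, one can invoke Theorem A directly. We have $p_g(X)=h^{n-1,0}(X)=h^0(K_X)=1>0$. By the Lefschetz hyperplane theorem, $H^{n-2}(X)\cong H^{n-2}(\bP^n)$, which is of pure type $(p,p)$ (or zero), so $H^{n-2,0}(X)=0$ because $n-2\geq1$. Hence Theorem A gives the maximal variation of $|\cO_X(1)|$.

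Second, we apply Beauville's criterion from \cite{Beau}, as recalled in the Introduction: for a smooth hypersurface $X=V(f)$ of degree $d$, the system $|\cO_X(1)|$ has maximal variation if and only if, for general $x\in R^1_f$, the multiplication map $\cdot x:R^{d-1}_f\to R^d_f$ is injective. Substituting $d=n+1$ gives exactly the injectivity of $\cdot x:R^n_f\to R^{n+1}_f$. The socle degree $(n+1)(d-2)$ is stated only for completeness.

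The one step needing care is that the notion of maximal variation proved in Theorem \ref{THM:main} matches the hypothesis of Beauville's equivalence. Beauville's condition is generic finiteness of the moduli map on the family of smooth hyperplane sections, or equivalently the absence of an isotrivial family through the general point of $\bP(H^0(\cO_X(1)))$. Theorem A excludes isotrivial subvarieties through the general point, and this is precisely that condition. One should also confirm that Beauville's hypotheses ($d\geq3$, $n\geq3$) are met, which holds since $d=n+1\geq4$.
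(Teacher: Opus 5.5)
Your proposal is correct and follows the paper's route: the paper deduces the corollary directly from Theorem C, which amounts to Theorem A applied after adjunction ($K_X=\cO_X$ gives $p_g=1$, and the Lefschetz hyperplane theorem gives $H^{n-2,0}(X)=0$). It then combines this with Beauville's equivalence (Proposition \ref{PROP:beauville}), whose hypothesis on the degree ($d=n+1\geq 4$) you correctly verified.
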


Observe that the validity of the same Lefschetz property for the Jacobian ring of a smooth projective surface in $\bP^3$ of degree $d\geq4$ is already known as for that of a smooth hypersurface in $\bP^n$ of degree $d\geq n+2$, as recalled above. We find pleasant that the classical Lefschetz theory helps again (see for instance \cite{St}) in the algebraic Lefschetz problems.
\medskip
\ \\
The paper is organized as follows. In Section \ref{SEC:NotandPrelim} we introduce the main objects we will deal with and in Section \ref{SEC:pencilsandvariations} we prove Theorem A. In Section \ref{SEC:surfaces}, we focus on the case of surfaces, dealing with étale double coverings and Enriques surfaces (in Subsection \ref{SUBSEC:doublecoverings}) and with irregular surfaces of maximal Albanese dimension (in Subsection \ref{SUBSEC:irrsurfaces}), proving then Theorem B. In Section \ref{SEC:hypersurfaces} we consider the case of smooth projective hypersurfaces and related algebraic Lefschetz properties by proving Theorem C ad Theorem D. Finally, in Section \ref{SEC:openquestions}, we present some questions still open and naturally arising in this context.
\bigskip
\ \\
{\bf{Aknowledgments}}: The authors are members of GNSAGA (INdAM). The first named author is supported by INdAM and partially supported by INdAM-GNSAGA Project {\em{Surfaces of general type and their geometry}} CUP E53C25002010001. The authors want to express their gratitude to Alexander Kuznetsov who pointed out a mistake in a preliminary version of the work. The authors want also to warmly thank Arnaud Beauville for pointing out some useful remarks, as also the application of our main result to the case of smooth complete intersections.

\section{Notation and Preliminaries}
\label{SEC:NotandPrelim}

Let $X$ be a smooth complex projective variety of dimension $n\geq2$ and let $L$ be a very ample line bundle on $X$. Denoting by $V=H^0(X,L)$ the space of global sections of $L$, the line bundle $L$ induces an embedding $\phi:X\hookrightarrow \bP(V^*)$. By abuse of notation, we will identify $X$ with its image $\phi(X)$.\\
For $t\in\bP(V)$, we denote by $H_t$ the corresponding hyperplane and by $Y_t:=H_t\cap\phi(X)$ the hyperplane section of $X$ defined by $H_t$. Equivalently, $Y_t$ is the zero locus of a non-zero section $s_t\in V$, with $[s_t]=t$.
\smallskip
\ \\
There is a universal family parametrizing the hyperplane sections of $X$, namely $\pi:\cY\rightarrow\bP(V)$. More precisely, defining $\cY\subseteq\bP(V)\times X$ as
$$\cY:=\{(t,x): x \in Y_t\},$$
the first projection gives the above family $\pi$. For any subvariety $Z\subseteq\bP(V)$, we can then define the induced family
$$\pi_Z:\cY_Z:=\pi^{-1}(Z)\rightarrow Z.$$
Note that if $\dim(Z)>0$, then the second projection 
$$\psi_Z:\cY_Z\rightarrow X$$
is dominant.

\begin{definition}
\label{DEF:isotrivial}
    Let $Z\subseteq\bP(V)$ be a connected subvariety with $\dim(Z)>0$. We say that
    \begin{enumerate}
        \item $Z$ is {\textnormal{isotrivial}} if the family of hyperplane sections $\{Y_z\}_{z\in Z}$ is a constant family, or, equivalently, if the induced family $\cY_Z\rightarrow Z$ is isotrivial on an open subset of $Z$;
        \item $Z$ is {\textnormal{birationally isotrivial}} if the hyperplane sections $\{Y_z\}_{z\in Z}$ are all birational to one another.
    \end{enumerate}
\end{definition}

\begin{definition}
    \label{DEF:maximalvariationofL}
    Given a smooth complex projective variety $X$ and a very ample line bundle $L$ on it, we say that the linear system $|L|$ has {\textnormal{maximal variation}} if there does not exist an isotrivial subvariety $Z\subseteq\bP(H^0(X,L))$ passing through the general point of $\bP(H^0(X,L))$.\\
    Moreover, we say that $X$ has {\textnormal{maximal variation}} if every very ample line bundle $L$ on it does.
\end{definition}

Let now $W\subseteq\bP(V)$ be the open subset corresponding to smooth hyperplane sections of the smooth complex projective variety $X$. Given $t\in W$ and the corresponding $Y_t$, which is then a smooth projective variety of dimension $n-1$, denote by 
$$j_t:Y_t\hookrightarrow X$$
the natural inclusion in $X$.\\
By Lefschetz theorem on hyperplane sections (see for example \cite{Voi2}[Theorem 1.23]), we have that 
$$H^k(X) \xrightarrow{\sim} H^k(Y_t) \qquad \mbox{for} \quad k<n-1,$$
while in degree $n-1$ we have just an injection $j_t^*:H^{n-1}(X)\hookrightarrow H^{n-1}(Y_t)$. Here, the so-called {\it{vanishing cohomology}} appears (\cite{Voi2}[Ch. 2]). 
\begin{definition}
    Let $Y$ be a hyperplane section of dimension $n-1$ of a smooth projective variety $X$ and $j:Y\hookrightarrow X$ the natural inclusion. For every coefficient ring $A$, the vanishing cohomology in degree $n-1$ of $Y$ is defined as the kernel of the Gysin morphism:
    $$H^{n-1}(Y,A)_{\mbox{van}}:=\ker(j_*:H^{n-1}(Y,A)\rightarrow H^{n+1}(X,A)).$$
\end{definition}

Denoting by $\bH_t$ the vanishing cohomology in degree $n-1$ (with rational coefficients) of $Y_t$, i.e. $H^{n-1}(Y_t,\bQ)_{\mbox{van}}$, we have a decomposition as an orthogonal direct sum, with respect to the intersection form on $H^{n-1}(Y_t,\bQ)$:
\begin{equation}
\label{EQ:vanishing}
H^{n-1}(Y_t,\bQ)=\bH_t\oplus j_t^*H^{n-1}(X,\bQ),
\end{equation}
where the second summand gives a constant Hodge structure.\\
Let us now recall the following:
\begin{definition}
        Given a holomorphic line bundle $L$ on a complex projective variety $X$, a pencil of hypersurfaces on $X$ is a projective line $\bP^1\subset \bP(H^0(X,L))$. Such a pencil is a Lefschetz pencil if it is such that 
        \begin{itemize}
            \item its base locus is smooth of codimension $2$ in $X$,
            \item the general hypersurface is smooth and the singular ones have at most one ordinary double point as singularity.
        \end{itemize}
\end{definition}

For what concerns the existence of such a pencil, it is well known (see, for example, \cite{Voi2}[Cor. 2.10]) that if $X$ is a smooth projective complex variety, then a generic pencil of hyperplane sections of $X$ is a Lefschetz pencil.

\begin{remark}
    The vanishing cohomology $\bH_t$ of the hyperplane section $Y_t$ is contained in the primitive cohomology of degree $n-1$ of $Y_t$ and it is generated by the classes of vanishing spheres of a Lefschetz pencil passing through $t\in \bP(V)$ {\textnormal{(\cite{Voi2}[Cor. 2.25, Lem. 2.26])}}.
\end{remark}

Let us fix a Lefschetz pencil $\bL\subseteq\bP(V)$ and let $\bL_0\subseteq\bL$ be the open set corresponding to the smooth hyperplane sections (i.e. $\bL_0=\bL\cap W$). Fix a general point $0\in\bL_0$ and let $Y=Y_0$ be the general hyperplane section of $X$ parametrized by $\bL$. 
\begin{remark}
\label{REM:irreduciblerepresentation}
    Recall that the monodromy action induces an irreducible representation of the fundamental group $\pi_1(\bL_0,0)$ on the vanishing cohomology $\bH_0$ {\textnormal{(\cite{Voi2}[Th. 3.27])}}.
\end{remark}

\section{Lefschetz pencils and variations}
\label{SEC:pencilsandvariations}

With the previous notation, in this section we state and prove our main result:
\begin{theorem}
\label{THM:main}
    Let $X$ be a smooth projective variety of dimension $\dim(X)=n\geq2$, embedded in $\bP(V^*)$, where $L$ is a very ample line bundle on $X$ and $V=H^0(X,L)$.\\
    Assume that
        $$p_g(X)=\dim(H^{n,0}(X))>0 \qquad \mbox{and} \qquad H^{n-1,0}(X)=\{0\}.$$
    Then there doesn't exist an isotrivial subvariety of $\bP(V)$ passing through the general point of $\bP(V)$. \\
    In particular, $|L|$ has maximal variation and so $X$ has maximal variation.
\end{theorem}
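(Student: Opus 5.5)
We argue by contradiction, following the strategy sketched in the introduction. Suppose isotrivial subvarieties pass through the general point of $\bP(V)$. Cutting down, we may assume they are curves: through a general $t\in\bP(V)$ there is an irreducible curve $Z\ni t$ such that $Y_z\cong Y_t$ for $z$ in a dense open subset of $Z$. After a finite base change $C\to Z$ (with $C$ a smooth curve) trivializing the isotrivial family over an open set, we obtain a birational identification $\cY_Z\times_Z C\sim C\times Y$ with $Y=Y_t$. Composing with the second projection $\psi_Z$ gives a dominant rational map
$$\psi: C\times Y\dashrightarrow X.$$
Since $\dim(C\times Y)=n=\dim X$, $\psi$ is generically finite. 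Resolving indeterminacy (top holomorphic forms are birational invariants), the pull-back $\psi^*:H^{n,0}(X)\to H^{n,0}(C\times Y)$ is injective, and it is nonzero because $p_g>0$.

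\textbf{Step 1: locate the image in the K\"unneth decomposition.} We have $H^{n,0}(C\times Y)=H^{1,0}(C)\otimes H^{n-1,0}(Y)$. By \eqref{EQ:vanishing}, $H^{n-1}(Y)=\bH\oplus j^*H^{n-1}(X)$ is an orthogonal sum of Hodge structures. The assumption $H^{n-1,0}(X)=0$ then gives $H^{n-1,0}(Y)=\bH^{n-1,0}$. Hence
$$\psi^*H^{n,0}(X)\subseteq H^{1,0}(C)\otimes \bH^{n-1,0}.$$
For each $\omega\in H^{n,0}(X)$ we obtain a map $H^{1,0}(C)^*\to\bH^{n-1,0}$, or, via the Hodge classes of $\psi^*$ in cohomology, a morphism of Hodge structures $\phi_\omega: H_1(C)\to \bH\subset H^{n-1}(Y,\bQ)$. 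This morphism is nonzero whenever $\omega\neq0$.

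\textbf{Step 2: monodromy.} The construction depends on the general point $t$, so it can be carried out in families. As $t$ moves in the open set $W$, or along a Lefschetz pencil $\bL$ through $t$, the curves $C_t$ and the maps $\psi_t$ vary algebraically. After a further finite base change of the parameter space, the sub-Hodge structure $\mathrm{Im}(\phi)\subseteq\bH_t$ spanned by the images of the $\phi_\omega$ forms a local system. Its monodromy then factors through the monodromy of the $H_1(C_t)$, which a priori is unrelated. The cleanest route is to define $K_t\subseteq\bH_t$ as the rational sub-Hodge structure generated by all images $\phi_\omega(H_1(C))$ over all isotrivial curves through $t$. This $K_t$ is canonically defined, so it is monodromy invariant under $\pi_1(\bL_0,0)$, possibly after passing to a finite index subgroup to handle the finitely many choices. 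By Remark \ref{REM:irreduciblerepresentation}, the monodromy representation on $\bH_0$ is irreducible. It remains irreducible for finite index subgroups by Picard--Lefschetz, since such a subgroup contains powers $T_\delta^k$ of the reflections, and $T_\delta^k(x)=x\pm k\langle x,\delta\rangle\delta$. Hence $K_t=\bH_t$.

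\textbf{Step 3: contradiction via Picard--Lefschetz.} The equality $K_t=\bH_t$ would force every vanishing class $\delta$ to lie in the span of the images of curve cohomology pulled back from $X$ through top forms. We then use the explicit formula $T_\delta(x)=x\pm\langle x,\delta\rangle\delta$. The classes $\psi^*\omega$ are restrictions of the fixed global classes $\omega$ on $X$, transported along the family, so they should be invariant under the local monodromy around a nodal fibre. Pairing with the vanishing cycle $\delta$ then shows that the component of $\psi^*\omega$ along $\delta$ vanishes. By irreducibility, and since the $\delta$ generate $\bH$, the projection of $\psi^*\omega$ to $H^1(C)\otimes\bH$ is zero. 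Combined with Step 1, this gives $\psi^*\omega=0$, contradicting injectivity.

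The main obstacle is Step 3, together with the bookkeeping in Step 2. One must make precise in what sense $\psi^*\omega$ is monodromy invariant when $C$ and $\psi$ themselves vary with $t$. The approach I would try first is to work with the Leray/K\"unneth component of $\omega$ pulled back to the total space $\cY_{\bL}$ over $\bL$, which is a global class. Its image in $H^0(\bL_0,R^{n-1}\otimes\dots)$ is an invariant section, and invariant vectors in the irreducible nontrivial local system $\bH$ are zero.
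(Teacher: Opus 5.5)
Your Step 1 matches the paper. Steps 2--3, however, never reach a contradiction, and they fail at exactly the point you flag yourself.

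The classes $\sigma_b:=\psi_b^*\omega$ form a flat section, over the parameter curve $B^0$ (finite over $\bL_0$), of the local system $\cH^1_C\otimes\bH$. Here $\cH^1_C$ is the first cohomology of the \emph{moving} curves $C_b$, and its monodromy is unknown. A flat section of such a tensor product is a monodromy-equivariant map $H_1(C_b)\to\bH_b$. Irreducibility of $\bH$ only says (Schur) that this map is zero or surjective. The surjective alternative is essentially your conclusion $K_t=\bH_t$, which is consistent rather than contradictory.

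For the same reason, your closing argument (``invariant vectors in the irreducible nontrivial local system $\bH$ are zero'') would only apply if $\cH^1_C$ had trivial monodromy. The ``pairing with $\delta$'' in Step 3 silently assumes this as well. Write $\sigma_b=\sum_i\alpha_i\otimes\gamma_i$, with $(\alpha_i)$ a basis of $H^{1,0}(C_b)$ and $\gamma_i\in\bH_b^{n-1,0}$. Invariance under a local monodromy $T=T_C\otimes T_\delta^{k}$, where $T_\delta^k(x)=x+c\langle x,\delta\rangle\delta$, only yields
\[
\sum_i(\alpha_i-T_C\alpha_i)\otimes\gamma_i=\Gamma\otimes\delta,\qquad \Gamma=c\sum_i\langle\gamma_i,\delta\rangle\,T_C\alpha_i ,
\]
and nothing in your argument separates the two sides.

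The missing idea is to use the Hodge type on the $Y$-factor. Your Step 2 throws this away by passing to the rational sub-Hodge structure generated by the images. (For $\omega$ of type $(n,0)$, contraction with $\psi^*\omega$ is not a morphism of Hodge structures; it is a $\bC$-linear map with image in $\bH^{n-1,0}$.)

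In the identity above, the left-hand side lies in $H^1(C_b,\bC)\otimes\bH_b^{n-1,0}$. On the other hand, $\delta$ is a nonzero real class of weight $n-1\geq1$, so $\bC\delta\cap\bH_b^{n-1,0}=0$. Hence both sides vanish. Since the $T_C\alpha_i$ are independent and $c\neq0$, you get $\langle\gamma_i,\delta\rangle=0$ for every $i$, whatever $T_C$ is. This is the paper's argument: the identity $\Theta+\Gamma\wedge\delta=0$ followed by comparison of Hodge types. Spanning by vanishing cycles and nondegeneracy of the intersection form then give $\gamma_i=0$.

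Equivalently, within your own framework: the \emph{complex} span of the images of $H_1(C_b,\bC)$ under contraction with $\sigma_b$ is stable under the finite-index monodromy group. It is also contained in $\bH_b^{n-1,0}\subsetneq\bH_{b,\bC}$. So irreducibility forces it to be $0$, not all of $\bH_b$.

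One further caveat concerns your finite-index irreducibility step. The formula $T_\delta^k(x)=x\pm k\langle x,\delta\rangle\delta$ is valid only when $n-1$ is odd. When $n-1$ is even, $T_\delta$ is a reflection with $T_\delta^2=1$, so lifted loops with even ramification give $c=0$. In that case irreducibility for a finite-index subgroup needs another input, for example Deligne's theorem that the monodromy is then finite or Zariski dense in the orthogonal group. The paper's assertion $k_\delta\neq0$ has the same blind spot.
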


\begin{proof}
    Let us proceed by contradiction: we assume that an isotrivial subvariety $Z\subseteq\bP(V)$ of positive dimension passes through the general point $0$ of $\bP(V)$ (recall Definition \ref{DEF:isotrivial}). In particular, we will consider then a Lefschetz pencil $\bL$ passing through $0$ (hence, $0$ denotes also the general point of the considered Lefschetz pencil).\\
    Our aim is to construct a curve $B\rightarrow \bL\simeq\bP^1$, parametrizing $1$-dimensional isotrivial families of hyperplane sections of $X$.\\
    To do that, let us assume, in particular, that there exists an irreducible isotrivial curve passing through $0\in\bL$. Such a curve deforms by moving away from the general point $0$, and by spreading the process, we can define a surface $\cD\subset\bP(V)$ such that 
    \begin{itemize}
        \item $\cD$ contains the Lefschetz pencil $\bL$,
        \item $\cD$ is covered by a $1$-dimensional family of isotrivial curves,
        \item there are finitely many of these curves passing through the general point of $\bL$.
    \end{itemize}
    By considering the corresponding universal family parametrized by a curve $B'$ contained in a suitable Hilbert scheme of $\bP(V)$, we can define the map
    $$q':\cS'\rightarrow B',$$
    such that for a general point $b'\in B'$ the fiber $q'^{-1}(b')$ is an isotrivial curve contained in $\cD\subset\bP(V)$. Observe that up to a normalization at the general point, we can assume that the general fiber of the map $q'$ is a smooth curve.\\
    Considering the natural dominant map $p':\cS'\rightarrow\cD$, let us denote by $\cL$ a component of $p'^{-1}(\bL)$ dominating $B'$ via $q'$. By the base change $B\rightarrow B'$, where $B$ is the normalization of $\cL$, we can then construct a family 
    $$q:\cS\rightarrow B,$$
    which is endowed with the section naturally arising from the considered base change.\\
    By using the notations introduced in Section \ref{SEC:NotandPrelim}, the surface $\cD$ defines a family $\cY_{\cD}$ of hyperplane sections of $X$, inducing, via the dominant map $p:\cS\rightarrow \cD$, a family
    $$\pi:p^*(\cY_{\cD})=:\cY_{\cS}\rightarrow \cS,$$ where $\dim(\cY_{\cS})=\dim(\cY_{\cD})=n+1$. Given a general point $b\in B$, we have then
    $$\cY_{\cS_b}\xrightarrow{\pi_b}\cS_b\xrightarrow{q}\{b\},$$
    where $\cS_b$ represents an isotrivial curve and $\cY_{\cS_b}$ the corresponding induced family of hyperplane sections of $X$ ($\dim(\cY_{\cS_b})=n$). \\
    Putting this construction in family again, we have then obtained
    $$
    \begin{tikzcd}
    & \cS \arrow[dr, "q"]
       & \\
    \cY_{\cS} \arrow[ur]
      \arrow[rr, "\pi"]
      \arrow[d, "\psi"']
    && B^0
      \arrow[ul, bend right=35, "\iota"']
      \arrow[d] \\
    X && \mathbb{P}^1
    \end{tikzcd}
    $$
    where $B^0$ denotes a suitable Zariski open subset of $B$. In particular, for a point $b\in B^0$, after performing a base change and up to a birational map, we can describe the family $\cS_b$ as a product $C_b\times Y_b$ where $C_b$ is a curve and $Y_b$ is the general element of the family $\cS_b$ (see \cite{BBG}). We can then define the dominant rational (and generically finite) map
    $$\psi_b:C_b\times Y_b\dashrightarrow X.$$
    Since, by assumption, $p_g(X)>0$ we can consider the injective non-trivial morphism of Hodge structures, defined as the pullback of top forms:
    \begin{equation}
        \label{EQ:psibstar}
        \psi_b^*:H^{n,0}(X)\rightarrow H^{n,0}(C_b\times Y_b).
    \end{equation}
    To analyze the image of $\psi_b^*$, let us start by decomposing the cohomology group $H^n(C_b\times Y_b)$ via K\"unneth formula:
    $$H^n(C_b\times Y_b)\cong\left(H^0(C_b)\otimes H^n(Y_b)\right)\oplus\left(H^1(C_b)\otimes H^{n-1}(Y_b)\right)\oplus\left(H^2(C_b)\otimes H^{n-2}(Y_b)\right).$$
    Since $H^0(C_b)\cong\bQ$ and $H^2(C_b)\cong\bQ$, we have, by the Lefschetz hyperplane theorem,
    $$H^n(C_b\times Y_b)\cong H^{n-2}(X)^{\oplus2}\oplus \left(H^1(C_b)\otimes H^{n-1}(Y_b)\right).$$
    From Equation \eqref{EQ:vanishing}, 
    we can write 
    $$H^n(C_b\times Y_b)\cong H^{n-2}(X)^{\oplus2}\oplus\left(H^1(C_b)\otimes j_b^*H^{n-1}(X)\right)\oplus\left(H^1(C_b)\otimes\bH_b\right),$$
    where $j_b$ is the natural inclusion of $Y_b$ in $X$.
    Since we are looking at the level of forms of type $(n,0)$ and since, by assumptions, $H^{n-1,0}(X)=\{0\}$, it is then clear that the image of $H^{n,0}(X)$ via $\psi_b^*$ has to lie in the summand $H^1(C_b)\otimes\bH_b$. In particular, we are then looking at the injective morphism
    $$\psi^*_b:H^{n,0}(X)\rightarrow H^{1,0}(C_b)\oplus\bH_b^{n-1,0},$$
    where, by abuse of notation, we keep on denoting by $\bH_b$ the vanishing cohomology with $\bC$-coefficients.
    Let then $\omega\in H^{n,0}(X)$ be a non zero class: by the injectivity of $\psi^*_b$ we must have
    $$0\neq\psi_b^*(\omega)\in H^{1,0}(C_b)\otimes\bH_b^{n-1,0}.$$
    Let us now fix $\{\alpha_i\}_i$ a basis of $H^{1,0}(C_b)$ e $\{\beta_j\}_j$ a basis of $\bH_b^{n-1,0}$.    We can then write
    $$\psi_b^*(\omega)=\sum_{i,j}a_{i,j}\alpha_i\wedge\beta_j,$$
    for some complex coefficients $a_{i,j}$: our claim is to show that the coefficients $a_{i,j}$ are zero for every $i$ and $j$, to get that $\psi_b^*(\omega)$ has to be actually trivial, getting the desired contradiction.\\
    To do so, let us use the monodromy action and the Picard-Lefschetz formula (see for example \cite{Voi2}). We take a loop around a critical point of the pencil and lift it to $B$ (with some multiplicity). Let then $\delta$ be the corresponding vanishing cycle around one of the inverse images of the critical point and $T:=T_{\delta}$ the associated monodromy action. As a first fact, let us observe that $T$ fixes the class $\psi_b^*(\omega)$, since it is defined over the variety $X$. It then gives
    \begin{equation}
    \label{EQ:Tomega}
        \psi_b^*(\omega)=T(\psi_b^*(\omega))=\sum a_{i,j}T(\alpha_i)\wedge T(\beta_j)=\sum a_{i,j}T(\alpha_i)\wedge(\beta_j+k_{\delta}(\delta\cdot\beta_j)\delta),
    \end{equation}
    where $k_{\delta}\neq0$ is an integer depending on the ramification of the base change $B\rightarrow \bL$ around the critical point of the Lefschetz pencil corresponding to the vanishing cycle $\delta$. Writing $T(\alpha_i)=\alpha_i+S(\alpha_i)$, we get
    $$\psi_b^*(\omega)=\sum a_{i,j}(\alpha_i+S(\alpha_i))\wedge(\beta_j+k_{\delta}(\delta\cdot\beta_j)\delta)=$$
    $$=\sum a_{i,j}\alpha_i\wedge\beta_j+\sum a_{i,j}S(\alpha_i)\wedge\beta_j+\left(\sum k_{\delta}a_{i,j}(\delta\cdot\beta_j)T(\alpha_i)\right)\wedge\delta,$$
    where the first summand of this last expression equals $\psi(\omega)$. Hence, we have
    \begin{equation}
        \label{EQ:ThetaeGamma}
        0=\sum a_{i,j}S(\alpha_i)\wedge\beta_j+\left(\sum k_{\delta}a_{i,j}(\delta\cdot\beta_j)T(\alpha_i)\right)\wedge\delta=:\Theta+\Gamma\wedge\delta,
    \end{equation}
    where $\Theta\in H^{1,0}(C_b)\otimes\bH_b^{n-1,0}$, $\Gamma\in H^{1,0}(C_b)$, and $\delta$ is a real class.\\
    First of all, let us assume that $\Gamma\neq0$: we get that $\Theta\neq0$ and, in particular, $\Gamma\wedge\delta=-\Theta$. Hence, we obtain that $\Gamma\wedge \delta\in H^{1,0}(C_b)\otimes\bH_b^{n-1,0}$, i.e. the component $\delta^{n-1,0}$ has to be different from zero. By conjugation, since it is a real class, we have that also $\delta^{0,n-1}\neq0$. Observe that, since $T$ is a linear isomorphism, the elements $T(\alpha_i)$ give again a basis of $H^{1,0}(C_b)$, and so $\Gamma\wedge\delta^{0,n-1}\neq0$: comparing Hodge types, we would have that $\Theta + \Gamma\wedge\delta$ can not be equal to zero, giving a contradiction. 
    The only possibility is then that $\Gamma=0$, i.e.
    $$\sum k_{\delta}a_{i,j}(\delta\cdot\beta_j)T(\alpha_i)=0.$$
    Since $T$ is a linear isomorphism, the elements $T(\alpha_i)$ are independent and, since $k_{\delta}\neq0$, for each index $i$, we get
    \begin{equation}
        \label{EQ:irreducibilitystep}
        \delta\cdot\sum_j a_{i,j}\beta_j=0.
    \end{equation}
    Observing that this holds for any vanishing cycle $\delta$ and recalling that these vanishing cycles generate $\bH_b$ and that on $\bH_b$ the intersection is non degenerate, we get
    $$\sum_j a_{i,j}\beta_j=0.$$
    Finally, since the $\beta_j$'s are independent, we get that $a_{i,j}$ has to be zero for every $i$ and every $j$ as claimed, i.e.
    $$\psi_b^*(\omega)=0,$$
    which gives a contradiction.
\end{proof}


\begin{remark}
    From the results proved in \cite{BBG}, one can actually repeat the above proof showing also that, with the same hypotheses, there does not exist even a birationally isotrivial subvariety (see Definition \ref{DEF:isotrivial}), passing through the general point of $\bP(V)$. 
\end{remark}

As a first application of the above result (other consequences will be discussed in the following sections), let us state the following:
\begin{corollary}
    Any smooth hyperk\"ahler variety has maximal variation.
\end{corollary}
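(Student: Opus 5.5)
The plan is to apply Theorem \ref{THM:main} directly, so everything reduces to checking its two Hodge-theoretic hypotheses for a smooth (compact, hence projective in our setting) hyperk\"ahler variety $X$. Recall that such an $X$ is simply connected, of even complex dimension $n=2m\geq 2$, and $H^0(X,\Omega_X^2)$ is spanned by a holomorphic symplectic form $\sigma$, everywhere non-degenerate. We then fix an arbitrary very ample line bundle $L$ on $X$ and use Theorem \ref{THM:main} to conclude that $|L|$ has maximal variation. Since $L$ is arbitrary, $X$ has maximal variation in the sense of Definition \ref{DEF:maximalvariationofL}.

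\textbf{Step 1: $p_g(X)>0$.} The form $\sigma^{m}\in H^0(X,\Omega_X^{2m})=H^{n,0}(X)$ is nowhere vanishing, because $\sigma$ is non-degenerate at every point. Hence $K_X$ is trivial and $h^{n,0}(X)=1>0$.

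\textbf{Step 2: $H^{n-1,0}(X)=0$.} We use the standard description of holomorphic forms on a hyperk\"ahler manifold via Bochner's principle: since $X$ is compact K\"ahler with a Ricci-flat metric, every holomorphic form is parallel. Since the holonomy is exactly $Sp(m)$, the parallel forms in $\bigwedge^\bullet T^*_{X,x}$ are the $Sp(m)$-invariants, and these are generated by $\sigma$. Therefore
$$H^0(X,\Omega_X^p)=\bC\cdot\sigma^{p/2}\quad\text{for }p\text{ even},\qquad H^0(X,\Omega_X^p)=0\quad\text{for }p\text{ odd}.$$
Since $n-1=2m-1$ is odd, we get $H^{n-1,0}(X)=0$. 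For $n=2$ (K3 surfaces) this is just $q=0$, which follows from simple connectedness. Since the paper allows assuming standard facts, I would cite the classical references (Beauville, Bogomolov) for this description rather than reprove it.

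\textbf{Step 3.} With both hypotheses verified and $n\geq 2$, Theorem \ref{THM:main} shows that no isotrivial subvariety passes through the general point of $\bP(H^0(X,L))$. This is exactly maximal variation of $|L|$, for every very ample $L$.

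The only nontrivial point is Step 2, namely the vanishing of odd-degree holomorphic forms. Once the holonomy/Bochner description is invoked it is immediate, so there is no real obstacle; one should only take care to state that "hyperk\"ahler" means irreducible holomorphic symplectic, since for products this description fails.
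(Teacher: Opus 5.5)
Your proposal is correct and matches the paper's approach: the corollary is stated there as an immediate application of Theorem \ref{THM:main}. The paper relies on the same two facts you check, namely that $p_g(X)=1>0$ via $\sigma^{m}$ and that $H^{n-1,0}(X)=0$ because holomorphic forms on a hyperk\"ahler variety are generated by the symplectic form and so vanish in odd degree.
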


As observed by Beauville \cite{Beau2}[Proposition 1], the maximal variation of the linear system $|L|$ for a line bundle $L$ on a smooth projective variety $X$ is equivalent to the vanishing of the cohomological group $H^0(Y,T_{X|Y})$ for $Y\in|L|$ general. Indeed, the coboundary map 
$$\delta: H^0(Y,N_{Y/X})\rightarrow H^1(Y,T_Y)$$
can be interpreted as the differential of a modular map from the Hilbert scheme of $X$ to a Kuranishi family.
By Theorem \ref{THM:main}, we then obtain the following result:
\begin{corollary}
    Let $X$ be a smooth projective variety of dimension $n\geq2$ with $p_g(X)>0$ and $H^{n-1,0}(X)=\{0\}$ and $L$ be a very ample line bundle on $X$. Then, for $Y$ being a general element of $|L|$, the map
    $H^0(Y,N_{Y/X})\rightarrow H^1(Y,T_Y)$ is injective. In particular, $H^0(Y,T_Y)$ and $H^0(Y,T_{X|Y})$ are isomorphic.
\end{corollary}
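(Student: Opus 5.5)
We will deduce the statement from Theorem \ref{THM:main} together with Beauville's infinitesimal criterion \cite{Beau2}[Proposition 1], as recalled just before the statement. Since $p_g(X)>0$ and $H^{n-1,0}(X)=\{0\}$, Theorem \ref{THM:main} shows that no isotrivial subvariety passes through the general point of $\bP(V)$. Hence $|L|$ has maximal variation. The second step is to translate this into a statement about the coboundary map. For $Y\in|L|$ smooth, the normal bundle sequence $0\to T_Y\to T_{X|Y}\to N_{Y/X}\to 0$ gives the long exact sequence
$$0\to H^0(Y,T_Y)\to H^0(Y,T_{X|Y})\to H^0(Y,N_{Y/X})\xrightarrow{\ \delta\ } H^1(Y,T_Y).$$
Here $\delta$ is the differential at $[Y]$ of the modular map from the (local) Hilbert scheme of deformations of $Y$ in $X$ to the Kuranishi family of $Y$.

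The third step is to show that $\delta$ is injective at a general $Y$. Suppose $\ker\delta\neq 0$ for general $Y$. Then the modular map, restricted to the open set of $|L|$ near $[Y]$, has positive-dimensional fibres generically; note that $H^0(Y,N_{Y/X})=H^0(Y,L_{|Y})$ contains the tangent space to $|L|$. By generic smoothness, or integrating the distribution $\ker d(\text{modular map})$ on an open set, we obtain a positive-dimensional subvariety through the general point along which the isomorphism class is constant. Such a subvariety is isotrivial in the sense of Definition \ref{DEF:isotrivial}, contradicting the first step. This is exactly the content of Beauville's equivalence, so we will simply cite it.

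One subtlety we must check, and which is the main point to handle, is whether $H^0(Y,N_{Y/X})$ is strictly larger than $T_{[Y]}|L|=H^0(X,L)/\bC s$. The difference is measured by $H^1(X,\cO_X)$. Using $H^{n-1,0}(X)=0$ does not directly kill $H^{0,1}$ when $n>2$, so we will rely on the formulation in \cite{Beau2}, which handles the Hilbert scheme rather than just $|L|$. Once $\delta$ is injective, exactness gives that $H^0(Y,T_{X|Y})\to H^0(Y,N_{Y/X})$ is zero. Hence $H^0(Y,T_Y)\to H^0(Y,T_{X|Y})$ is surjective, and since it is always injective, it is an isomorphism.
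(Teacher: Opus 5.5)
Your route is the paper's: Theorem~\ref{THM:main} gives maximal variation of $|L|$, and Beauville's infinitesimal criterion \cite{Beau2} converts this into a statement about $\delta$. Your final exact-sequence step is correct: injectivity of $\delta$ kills $H^0(Y,T_{X|Y})\to H^0(Y,N_{Y/X})$, hence $H^0(Y,T_Y)\cong H^0(Y,T_{X|Y})$.

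The gap is in your third step, and it is exactly the point you flag and then defer. Theorem~\ref{THM:main} only excludes isotrivial subvarieties of $\bP(V)=|L|$. Infinitesimally it therefore yields only
$$\ker\delta\cap\bigl(H^0(X,L)/\bC s\bigr)=0.$$
Your inference ``$\ker\delta\neq0$, hence the modular map on $|L|$ has positive-dimensional fibres'' uses the containment $T_{[Y]}|L|\subseteq H^0(Y,N_{Y/X})$ in the wrong direction. A nonzero kernel in the larger space need not meet the subspace. The two spaces differ by $\ker\bigl(H^1(X,\cO_X)\to H^1(X,L)\bigr)$, and for $n\geq 3$ the hypotheses do allow $q(X)>0$. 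For example, take $E$ an elliptic curve with $\deg D\geq 3$, and let $X$ be a smooth member of $|\cO_E(D)\boxtimes\cO_{\bP^3}(4)|$ on $E\times\bP^3$. Then $q(X)=1$, $h^{2,0}(X)=0$ by Lefschetz, and $p_g(X)\geq 3$. For $L$ sufficiently positive the extra piece is all of $H^1(X,\cO_X)\cong\bC$.

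Deferring to \cite{Beau2} does not close this. The extra directions are first-order deformations of $Y$ along which $\cO_X(Y)$ moves in $\Pic^0(X)$. The implication ``$|L|$ has maximal variation $\Rightarrow$ $\delta$ injective on $H^0(Y,N_{Y/X})$'' is false in general. For an abelian variety $A$, $|L|$ has maximal variation \cite{Beau2}, yet translation vector fields give $\ker\delta\neq0$: their normal components are nonzero, since an ample divisor is not invariant under a one-parameter group of translations.

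So as written your argument proves the corollary only when $H^1(X,\cO_X)=0$. This includes $n=2$, where the hypothesis $H^{1,0}(X)=0$ is $q=0$. In that case $H^0(Y,N_{Y/X})=H^0(X,L)/\bC s$ and your third step is exactly the cited criterion. For $q(X)>0$ you need an extra argument using $p_g>0$ and $H^{n-1,0}=0$. For instance, you would want a version of Theorem~\ref{THM:main} that excludes isotrivial families in the whole component of the Hilbert scheme through $[Y]$, not only inside $|L|$.

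The paper's own justification is the same one-line appeal to \cite{Beau2} and says nothing on this point. Its paraphrase ``maximal variation is equivalent to $H^0(Y,T_{X|Y})=0$'' already fails for abelian varieties. Your proposal therefore reproduces the paper's argument together with this omission.
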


Let us conclude this section by stressing that in the case where $X$ is a surface, by the above Theorem \ref{THM:main}, we get for $X$ the maximal variation property by requiring that it has positive geometric genus and $q(X)=0$, even though this may appear somewhat counterintuitive. Irregular surfaces, indeed, exhibit in general much greater rigidity and therefore more chances for the maximal variation property to be satisfied, as in the case of abelian surfaces, for which such a property is known (see for example \cite{Beau2}). We'll partially treat this case in the next section.

\section{The case of surfaces}
\label{SEC:surfaces}
In this section, let us consider the case of surfaces. Let $X$ be a smooth surface with $p_g(X)=h^2(X,\cO_X)>0$ and $L$ be, as above, a very ample line bundle on $X$, with $V=H^0(X,L)$. Let again $W$ be the open subset of $\bP(V)$ corresponding to smooth curves, sections of $X$. If $t\in W$, the genus of the corresponding section $Y_t$ is given, from adjunction formula, by
$$g=g(Y_t)=\frac{(K_X+L)\cdot L}{2}+1.$$
We can then consider a modular map $m: W\rightarrow \cM_g$, where $\cM_g$ is the moduli space of curves of genus $g$. Assuming that $p_g(X)>0$ and that $q(X)=0$, observe that we also get $g\geq 3$, as $L$ is very ample. From Theorem \ref{THM:main}, we have:
\begin{corollary}
\label{COR:genfiniteforsurfaces}
    Assume that $L$ is a very ample line bundle on a surface $X$ with $q(X)=0$ and $p_g(X)>0$. Then the modular rational map $\bP(H^0(X,L))\dashrightarrow\cM_g$ is generically finite.
\end{corollary}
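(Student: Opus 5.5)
This follows from Theorem \ref{THM:main} applied with $n=2$, once Definition \ref{DEF:maximalvariationofL} is rephrased in terms of the modular map. The hypotheses translate directly: $X$ is a smooth projective surface, so $n=2\geq 2$; $p_g(X)=h^{2,0}(X)>0$ by assumption; and $H^{1,0}(X)=\{0\}$ because $q(X)=h^{0,1}(X)=h^{1,0}(X)$ by Hodge symmetry. Theorem \ref{THM:main} then says that no isotrivial subvariety of $\bP(V)$, with $V=H^0(X,L)$, passes through the general point of $\bP(V)$.

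Next I restrict the modular map $m$ to the open set $W\subseteq\bP(V)$ of smooth sections. This is a morphism to $\cM_g$ on the coarse moduli space, with $g\geq 3$ as computed above. It is at least defined on a dense open subset of $W$: one may pass to a finite étale cover carrying a level structure, or simply work with the coarse space, since only the fibres matter. Suppose $m$ were not generically finite. Then the general fibre of $m_{|W}$ has positive dimension. Take a general point $t\in W$ and an irreducible component $F$ of $m^{-1}(m(t))$ through $t$; it has $\dim F>0$.

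Every point $z$ of $F$ parametrizes a curve $Y_z$ isomorphic to $Y_t$. Hence the family $\cY_F\to F$ has all fibres isomorphic, i.e.\ it is isotrivial on an open subset of $F$. Taking the closure of $F$ in $\bP(V)$ yields a connected positive-dimensional isotrivial subvariety in the sense of Definition \ref{DEF:isotrivial} through the general point $t$, contradicting Theorem \ref{THM:main}.

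The only point needing some care is the passage from "all fibres isomorphic" to "isotrivial on an open subset". For families of smooth curves of genus $g\geq 2$ this is standard: $\mathrm{Isom}$ is finite and unramified, so after a finite base change the family becomes trivial. Since Definition \ref{DEF:isotrivial}(1) already accepts "the family $\{Y_z\}$ is constant" as its formulation, this step is essentially immediate. I therefore expect no genuine obstacle: the argument is a direct translation of Theorem \ref{THM:main}.
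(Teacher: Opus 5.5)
Your proposal is correct and follows the paper's own route. The paper likewise deduces the corollary immediately from Theorem \ref{THM:main} with $n=2$, using $h^{1,0}(X)=q(X)=0$: a positive-dimensional fibre of the modular map through a general point would be an isotrivial subvariety, which that theorem excludes. Your extra care about the coarse moduli map and about passing from isomorphic fibres to isotriviality is fine, though the paper treats it as immediate.
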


In particular, applying the above result to a K3 surface we get the result of Dutta and Huybrechts (see \cite{DH}) and then of Bakker (see \cite{Bak}) (for the case of very ample line bundles). On the other hand, the case of Enriques surfaces, for which the hypotheses of Corollary \ref{COR:genfiniteforsurfaces} are not satisfied, seems not to be treatable with our techniques. Nevertheless, in what follows we get the same result as above also for Enriques surfaces, by lifting the problem to the associated K3 surface.

\subsection{Enriques and double coverings}
\label{SUBSEC:doublecoverings}
\ \\
Let us now look at those varieties $S$ of dimension $\dim(S)=n\geq2$, such that $h^{n,0}(S)=p_g(S)$ is zero and admitting an étale $2:1$ cover $\pi:X\rightarrow S$ such that $p_g(X)\neq0$. Our aim is to obtain the same result as the one of Theorem \ref{THM:main} for the variety $S$, by lifting the problem at the level of $X$.\\
In particular, we focus on the case of Enriques surfaces (partially treated in \cite{CDGK}) and we prove the following (see Theorem B from the Introduction):
\begin{theorem}
\label{THM:Enriques}
    Let $S$ be any smooth Enriques surface and $L$ be a very ample line bundle on $S$. Then $|L|$ has maximal variation and so $S$ has maximal variation.
\end{theorem}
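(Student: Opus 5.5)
We argue by contradiction, following the proof of Theorem \ref{THM:main} but transplanted to the K3 cover. Let $\pi:X\rightarrow S$ be the étale double cover by a K3 surface, with covering involution $\iota$, and set $\tilde L=\pi^*L$. Then $\tilde L$ is very ample, or at least ample and base point free, which suffices for a Lefschetz pencil argument since $\pi$ is étale. Suppose an isotrivial subvariety of $\bP(H^0(S,L))$ passes through the general point. Running the construction in the proof of Theorem \ref{THM:main} verbatim on $S$, we obtain, for $b$ general, a generically finite dominant rational map $\psi_b:C_b\times Y_b\dashrightarrow S$, where $Y_b$ is a general curve in $|L|$, together with a compatible monodromy setting coming from a Lefschetz pencil $\bL\subset\bP(H^0(S,L))$.

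The first step is to lift the map. Its image $\pi^{-1}(Y_b)=\tilde Y_b$ is an étale double cover of $Y_b$, given by the $2$-torsion class $\eta=K_S|_{Y_b}$. This class is nontrivial on the general $Y_b$, because $H^1(S,\bZ/2)\rightarrow H^1(Y_b,\bZ/2)$ is injective by Lefschetz. Since the family $C_b\times Y_b$ is a constant family, the fibre product $(C_b\times Y_b)\times_S X$ is, after a further finite base change $C'_b\rightarrow C_b$ (needed to trivialize the choice among the finitely many $2$-torsion lifts), birational to $C'_b\times\tilde Y_b$. This yields a dominant map $\tilde\psi_b:C'_b\times\tilde Y_b\dashrightarrow X$. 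The curves $\pi^{-1}(Y_t)$ are exactly the $\iota$-invariant members of $|\tilde L|$, so the pencil $\bL$ lifts to a pencil $\tilde\bL\subset\bP(H^0(X,\tilde L))^{+}$.

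Next we redo the Hodge-theoretic argument. Take $0\neq\omega\in H^{2,0}(X)$; note that $\iota^*\omega=-\omega$. Because $q(X)=0$, the Künneth argument places $\tilde\psi_b^*\omega$ in $H^{1,0}(C'_b)\otimes \bH(\tilde Y_b)^{1,0}$, where $\bH(\tilde Y_b)$ is the vanishing cohomology of $\tilde Y_b\subset X$. The key difference from Theorem \ref{THM:main} is that the lifted pencil $\tilde\bL$ is not a general pencil in $|\tilde L|$. Its singular members are preimages of the nodal curves of $\bL$, so each has two nodes exchanged by $\iota$, and the associated monodromy is $T_{\delta}T_{\iota\delta}$ with $\delta\cdot\iota\delta=0$. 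Moreover, the monodromy only generates the image of vanishing cohomology under these paired cycles. Following \cite{LP}, we decompose
$$\bH(\tilde Y_b)=\bH^+\oplus\bH^-$$
into $\iota$-eigenspaces. Here $\bH^+\cong\pi^*\bH(Y_b)$, and $\bH^-$ is the anti-invariant vanishing part, spanned by the classes $\delta-\iota\delta$. Since $\omega$ is anti-invariant and we may make $\tilde\psi_b$ $\iota$-equivariant (acting by $\iota$ on $\tilde Y_b$ and trivially on $C'_b$), the class $\tilde\psi_b^*\omega$ lies in $H^{1,0}(C'_b)\otimes(\bH^-)^{1,0}$.

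Finally we apply the Picard--Lefschetz computation of Theorem \ref{THM:main} with $T=T_\delta T_{\iota\delta}$. This gives
$$T(\beta)=\beta+k\big((\delta\cdot\beta)\delta+(\iota\delta\cdot\beta)\iota\delta\big),$$
and for $\beta\in\bH^-$ this equals $\beta+k(\delta\cdot\beta)(\delta-\iota\delta)$. The same Hodge-type comparison forces $\delta^-\cdot\sum_j a_{ij}\beta_j=0$ for all vanishing classes $\delta^-=\delta-\iota\delta$. We conclude that $\tilde\psi_b^*\omega=0$, contradicting injectivity on top forms. This requires the main obstacle: the intersection form must be nondegenerate on $\bH^-$, and $\bH^-$ must be generated by the $\delta-\iota\delta$. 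We also need that $\bH^-\neq$ the whole thing is handled correctly, i.e.\ that $\bH^-$ is nonzero and carries the $(1,0)$-classes that could receive $\omega$. Nondegeneracy follows from orthogonality of eigenspaces for an isometry. Generation is the content of the vanishing-cohomology description of étale double covers in \cite{LP}, which we would invoke, checking that the Lefschetz pencil on $S$ lifts to a pencil whose vanishing cycles on $\tilde Y_b$ come in disjoint $\iota$-pairs.
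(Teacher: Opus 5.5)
Your proposal is correct and follows the paper's route. Like the paper, you lift to the K3 double cover, split the vanishing cohomology of $\tilde Y_b$ into $\iota$-eigenspaces as in \cite{LP}, and rerun the Picard--Lefschetz and Hodge-type argument of Theorem~\ref{THM:main} on $\bH^-$ with the paired cycles $\delta-\iota\delta$. The one real difference is a simplification: your fibre-product base change makes $\iota$ act trivially on the curve factor, so $\tilde\psi_b^*\omega$ lands directly in $H^{1,0}(C'_b)\otimes(\bH^-)^{1,0}$, and you skip the paper's separate treatment of the summand $H^{1,0}(\cC)^-\otimes(\bH^{1,0})^+$. Replacing the paper's appeal to irreducibility of monodromy on $\bH^-$ by nondegeneracy on $\bH^-$ plus generation by the $\delta-\iota\delta$ works equally well.
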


To prove such a result, let us start with a couple of facts. 

\begin{fact}
\label{REM:veryampleness}
    In {\textnormal{\cite{LP}[Lemma 1.1]}}, it is proved that if $\pi:\tilde{Y}\rightarrow Y$ is a double cover ramified at a smooth divisor $B$, with $M^{\otimes2}=\cO_Y(B)$ and if $H$ is a very ample line bundle on $Y$, then the line bundle $\pi^*(H)$ is very ample on $\tilde{Y}$ if and only if $H\otimes M^{-1}$ is generated by global sections on $X$.\\
    In the specific case of an Enriques surface $S$, the $2:1$ cover $\pi$ from the corresponding $K3$ surface $X$ is étale and determined by the canonical line bundle $K_S$ which is a $2$-torsion element. If $L$ is a very ample line bundle on $S$, then by Reider's Theorem (see {\textnormal{\cite{Rei}}}, or {\textnormal{\cite{BHPV}[Thm. 11.4]}}) it's easy to show that $L\otimes K_S$ it's generated by its global sections. Hence, by the aforementioned result, the pull-back on the $K3$ surface $X$ of a very ample line bundle is again very ample. 
\end{fact}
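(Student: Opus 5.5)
The statement to justify is the last claim of Fact \ref{REM:veryampleness}: if $L$ is very ample on an Enriques surface $S$ and $\pi:X\to S$ is the K3 double cover, then $\pi^*L$ is very ample on $X$. The plan is to reduce this to the criterion of \cite{LP}[Lemma 1.1] and to check the hypothesis of that criterion with Reider's theorem.

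\textbf{Step 1 (setting up the criterion).} The cover $\pi$ is the étale double cover attached to the $2$-torsion bundle $K_S$. In the notation of \cite{LP}[Lemma 1.1] this means $B=0$ and $M=K_S$, with $M^{\otimes 2}=\cO_S$. The lemma then says that $\pi^*L$ is very ample if and only if $L\otimes M^{-1}=L\otimes K_S^{-1}$ is globally generated. Since $K_S^{-1}\cong K_S$, we must show that $K_S\otimes L$ is generated by its global sections.

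\textbf{Step 2 (Reider's theorem).} I will use Reider's theorem in its global-generation form. Let $N$ be a nef line bundle on a smooth projective surface with $N^2\geq 5$. If there is no effective curve $E$ with either $N\cdot E=0,\ E^2=-1$ or $N\cdot E=1,\ E^2=0$, then $K+N$ is globally generated. I apply it with $N=L$. Then $L$ is nef, being ample, and the case $N\cdot E=0$ cannot occur for an ample bundle.

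\textbf{Step 3 (the numerical conditions).} Since $K_S$ is numerically trivial, adjunction gives $E^2=2p_a(E)-2$ for every curve $E$; in particular intersection numbers of this kind are even. Suppose $E$ is a curve with $L\cdot E=1$. Then $E$ is a line in the embedding given by $L$, hence a smooth rational curve. It follows that $E^2=-2\neq 0$, so the second exceptional case is excluded as well.

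It remains to check $L^2\geq 5$. Since $L-K_S$ is ample, Kodaira vanishing gives $h^1(L)=h^2(L)=0$. Riemann–Roch together with $\chi(\cO_S)=1$ then gives $h^0(L)=L^2/2+1$.

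\begin{itemize}
\item If $h^0(L)\leq 3$, the surface $S$ could not be embedded, so $h^0(L)\geq 4$.
\item If $h^0(L)=4$, then $S$ would be a smooth surface in $\bP^3$. Its degree is $L^2=6$, so adjunction gives $K_S=\cO_S(2)$, which is not numerically trivial; this is a contradiction.
\end{itemize}

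Hence $h^0(L)\geq 5$, so $L^2\geq 8\geq 5$. Reider's theorem then gives that $K_S\otimes L$ is globally generated, and Step 1 concludes that $\pi^*L$ is very ample.

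The main obstacle I expect is matching Reider's exceptional cases and the hypotheses of \cite{LP} correctly, rather than any computation. First, I need to confirm that \cite{LP}[Lemma 1.1] applies in the étale case $B=0$; if it does not, I would instead verify directly that $\pi_*\pi^*L=L\oplus(L\otimes K_S)$ separates points and tangent vectors on $X$. Second, I need to confirm that I am using the bound $N^2\geq 5$ for global generation, not the stronger bound $N^2\geq 10$ needed for very ampleness.
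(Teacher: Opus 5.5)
Your proposal is correct and takes the same route as the paper: Lemma 1.1 of \cite{LP}, applied with $B=0$ and $M=K_S$ (it does hold in the étale case), reduces the claim to global generation of $L\otimes K_S$, which then follows from Reider's theorem. You also fill in the details the paper calls ``easy'', namely $L^2=2h^0(L)-2\geq 8$ and the exclusion of Reider's exceptional curves; the only thing to add there is that an effective divisor $E$ with $L\cdot E=1$ and $L$ ample is automatically reduced and irreducible, so your line argument giving $E^2=-2$ applies.
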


\begin{fact}
    \label{REM:isotriviality}
    Consider the embedding $S\rightarrow\bP(V^*)$ via a very ample line bundle $L$, such that $V=H^0(S,L)$, and also an isotrivial curve $\cC'$ in $\bP(V)$ (see Definition \ref{DEF:isotrivial}). By the above Fact \ref{REM:veryampleness} and since the double cover $\pi:X\rightarrow S$ is étale, the pull-back $\cC$ of $\cC'$ via $\pi$ defines again an isotrivial curve for the $K3$ surface $X$.
\end{fact}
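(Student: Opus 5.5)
The plan is to realize $\bP(V)$ as a linear subspace of $\bP(H^0(X,\pi^*L))$. We then show that over an isotrivial curve $\cC'$ the pulled-back hyperplane sections of $X$ are étale double covers of a \emph{fixed} curve, classified by a discrete invariant that cannot vary in a connected family.

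\textbf{Linear embedding.} Since $\pi$ is étale of degree $2$ with $\pi_*\cO_X=\cO_S\oplus K_S$, the projection formula gives
$$H^0(X,\pi^*L)\cong H^0(S,L)\oplus H^0(S,L\otimes K_S).$$
So $\pi^*:V\hookrightarrow H^0(X,\pi^*L)$ is injective and linear, and it induces a linear embedding $\bP(V)\hookrightarrow\bP(H^0(X,\pi^*L))$. By Fact \ref{REM:veryampleness}, $\pi^*L$ is very ample, so the target parametrizes hyperplane sections of $X$ in its embedding. Define $\cC$ as the image of $\cC'$; it is a connected curve of positive dimension. For $t\in\cC'$ the section $\pi^*s_t$ has zero locus $\tilde Y_t:=\pi^{-1}(Y_t)$. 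The restriction $\tilde Y_t\to Y_t$ is an étale double cover, given by the $2$-torsion class $K_S|_{Y_t}\in\Pic(Y_t)[2]$. Moreover, $\tilde Y_t$ is smooth whenever $Y_t$ is.

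\textbf{Constancy of the covers.} On a Zariski open subset of $\cC'$ the family $\{Y_t\}$ is isotrivial. After a finite (generically étale) base change $C\to\cC'$, and shrinking to an open set $C^0$, we may therefore assume $\cY_{C^0}\cong C^0\times Y$ over $C^0$. Under this trivialization the classes $K_S|_{Y_t}$ become a section over $C^0$ of the constant local system with fiber $\Pic(Y)[2]\cong H^1(Y,\bZ/2)$. The sheaf $R^1\pi_*\mu_2$ is locally constant, and the class is given by restricting the fixed line bundle $K_S$, so this section is locally constant. Since $C^0$ is connected and the fiber is finite, the class is constant, equal to some $\eta\in\Pic(Y)[2]$. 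Hence every $\tilde Y_t$ is isomorphic to the double cover $\tilde Y_\eta\to Y$. More concretely, the family $\tilde\cY_{C^0}\to C^0$ is the étale double cover of $C^0\times Y$ given by $pr_Y^*\eta$ (up to a further étale base change on $C^0$), so it is isotrivial. It follows that $\cC$ is an isotrivial curve for $X$ in the sense of Definition \ref{DEF:isotrivial}.

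\textbf{Main obstacle.} The delicate point is the rigidity step. One must justify that the trivialization of $\{Y_t\}$ can be chosen so that the $2$-torsion classes $K_S|_{Y_t}$ vary continuously, and that the automorphisms of $Y$ used in gluing do not spoil this. Since $\mathrm{Aut}(Y)$ is finite for $g\ge2$, any ambiguity is absorbed by a finite étale base change. After that base change the argument reduces to the discreteness of $H^1(Y,\bZ/2)$.
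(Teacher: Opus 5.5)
Your argument is correct and follows the same route as the paper. The paper justifies this Fact only by the very ampleness of $\pi^*L$, so that $\bP(V)\subset\bP(H^0(X,\pi^*L))$ parametrizes hyperplane sections of $X$, and by the étaleness of $\pi$, so that each $\pi^{-1}(Y_t)\to Y_t$ is an étale double cover. You also make explicit the rigidity step the paper leaves unstated: the class $K_S|_{Y_t}$ is constant in $H^1(Y,\bZ/2)$ over a connected trivializing base. Your extra étale base change on $C^0$ matches the paper's later remark that $\cC$ is either birational to $\cC'$ or a double cover of it.
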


Let us now prove the above Theorem \ref{THM:Enriques}:
\begin{proof}
    Let us denote by $X$ the K3 surface which is the étale $2:1$ cover of the Enriques surface $S$, via $\pi:X\rightarrow S$ and let $\iota$ be the associated involution.\\
    Let us proceed by contradiction. As done in the proof of Theorem \ref{THM:main}, we can put ourselves in the situation where there exists a dominant rational map $\cC'\times Z\dashrightarrow S$, where $\cC'$ is an isotrivial curve and $Z$ is the hyperplane section defined by a general point of $\cC'$. From Fact \ref{REM:isotriviality}, we have an induced isotrivial curve $\cC$ for the $K3$ $X$ and so a dominant rational map
    $$\psi:\cC\times Y\dashrightarrow X,$$
    where $Y:=\pi^{-1}(Z)$. Observe here that the general hyperplane section parametrized by $\cC$ is smooth (and birational to $Y$) and that the singular fibers are in correspondences of those singular sections for the Enriques surface $S$. Moreover, because of the double cover $\pi$, these special fibers are singular in two distinct nodes. Observe, moreover, that the curve $\cC$ is either birational to $\cC'$ or it is a double cover over $\cC'$.\\
    Since the geometric genus $p_g(X)$ of $X$ is not zero, the dominant map $\psi$ induces an injective and non-trivial morphism at the level of top forms, and in particular we get
    $$\psi^*:H^{2,0}(X)\rightarrow H^{2,0}(\cC\times Y).$$
    We now proceed as in the proof of Theorem \ref{THM:main}, with a particular attention on the nature of the vanishing cohomology for the sections of the double cover $X$: to do so, we refer to \cite{LP}.\\
    Observe that $H^1(X,\cO_X)=0$ and that the $(2,0)$-form $\omega$ generating $H^{2,0}(X)$ is anti-invariant with respect the involution $\iota$ on $X$. Then from K\"unneth decomposition, we have
    \begin{equation}
        \label{EQ:psistarEnriques}
        \psi^*:H^{2,0}(X)\rightarrow \left(H^{1,0}(\cC)^+\otimes (\bH^{1,0})^-\right)\oplus \left(H^{1,0}(\cC)^-\otimes (\bH^{1,0})^+\right),
    \end{equation}
    where $\bH$ denotes the vanishing cohomology of $Y$ and $\bH^-$ and $H^{1,0}(\cC)^-$ (respectively, $\bH^+$ and $H^{1,0}(\cC)^+$) denotes its anti-invariant (respectively, invariant) part with respect to the corresponding involutions.\\
    The image $\psi^*(\omega)$ can then a priori split into the above summands: let us firstly focus, with an abuse of notation, on
    $$\psi^*:H^{2,0}(X)\rightarrow H^{1,0}(\cC)^+\otimes (\bH^{1,0})^-.$$
    We can then write
    $$0\neq \psi^*(\omega)=\sum_{i,j}a_{ij}\alpha_i\wedge\beta_j,$$
    where $\{\alpha_i\}_i$ is a basis of $(H^{1,0}(\cC))^+$ and $\{\beta_j\}_j$ a basis of $(\bH^{1,0})^-$.\\
    In \cite{LP}, it has been showed that $\bH^-$ is the subspace of the anti-invariant primitive cohomology generated by the cycles $\gamma_{1s}-\gamma_{2s}$, where the $\gamma_{is}$ are the vanishing cycles of the two nodes over a critical point of the considered Lefschetz pencil $\bL$, i.e. the Lefschetz pencil which $\cC$ intersects in the general point.\\
    Let us now consider a critical point $s$ of $\bL$ and the corresponding monodromy action $T:=T_s$. As done in the proof of Theorem \ref{THM:main} (see Equation \eqref{EQ:Tomega}), we can write
    $$\psi^*(\omega)=T(\psi^*(\omega))=\sum_{i,j}a_{ij}T(\alpha_i)\wedge T(\beta_j).$$
    In this case, by the Picard-Lefschetz formula, we can write
    $$T(\beta_j)=\beta_j+k_s(\beta_j\cdot\gamma_{1,s})\gamma_{1s}+k_s(\beta_j\cdot\gamma_{2s})\gamma_{2s},$$
    where the integer $k_s\neq0$ is a suitable constant. As observed in \cite{LP}, since $\beta_j\in\bH^-$, we have that
    $$0=(\beta_j\cdot\gamma_{1s}+\gamma_{2s})=(\beta_j\cdot\gamma_{1s})+(\beta_j\cdot\gamma_{2s}) \quad \Longrightarrow \quad (\beta_j\cdot\gamma_{2s})=-(\beta_j\cdot\gamma_{1s}).$$
    Moreover, we have
    $$(\beta_j\cdot\gamma_{1s}-\gamma_{2s})=(\beta_j\cdot\gamma_{1s})-(\beta_j\cdot\gamma_{2s})=2(\beta_j\cdot\gamma_{1s}) \quad \Longrightarrow \quad (\beta_j\cdot\gamma_{1s})=\frac{1}{2}(\beta_j\cdot \gamma_{1s}-\gamma_{2s}).$$
    Hence, by denoting $\tilde{\gamma}_s:=\gamma_{1s}-\gamma_{2s}$, we can write
    $$T(\beta_j)=\beta_j+\frac{k_s}{2}(\beta_j\cdot\tilde{\gamma}_s)\tilde{\gamma}_s.$$
    By writing $T(\alpha_i)=\alpha_i+S(\alpha_i)$, as in the proof of Theorem \ref{THM:main} (see Equation \eqref{EQ:ThetaeGamma}), we get
    $$0=\sum_{i,j}S(\alpha_i)\wedge\beta_j + \left(\sum_{i,j}\frac{k_s}{2}a_{ij}(\beta_j\cdot\tilde{\gamma}_s)T(\alpha_i)\right)\wedge\tilde{\gamma}_s=\Theta+\Gamma\wedge\tilde{\gamma}_s.$$
    Assume that $\Gamma\neq0$: since $-\Gamma\wedge\tilde{\gamma}_s=\Theta$, we get by construction that $\tilde{\gamma}_s\in(\bH^{1,0})^-$, which is not possible, since it is a real class. We then get that $\Gamma=0$ and we obtain a contradiction as in the proof of Theorem \ref{THM:main}. Indeed, we get (as in Equation \eqref{EQ:irreducibilitystep}) that for any index $i$ 
    $$\tilde{\gamma}_s\cdot \sum_j a_{ij}\beta_j=0.$$
    Since in \cite{LP}, the authors also showed that the action of the monodromy of the Lefschetz pencil on $\bH^-$ is irreducible, one can then conclude that the coefficients $a_{ij}$ are zero for any pair of indices $i$ and $j$, getting a contradiction.\\
    In other words, we have shown that the map $\psi^*$ in \eqref{EQ:psistarEnriques} has image in the second summand, namely in $H^{1,0}(\cC)^-\otimes (\bH^{1,0})^+$. Observing that $(\bH^{1,0})^+=H^{1,0}(Z)$, one can get a contradiction reasoning in the same way as done in the proof of Theorem \ref{THM:main}, ending the proof.
\end{proof}


\subsection{Irregular surfaces}
\label{SUBSEC:irrsurfaces}
\ \\
In this subsection, we extend the previous results also to the case where $X$ is an irregular surface with Albanese dimension $2$. 

Let us start with a simple result.
\begin{lemma} 
\label{LEM:torsion}
Let $C$ be a smooth curve and $A$ be an abelian variety of dimension $q\geq2$. Let $f:C\rightarrow A $ be a map such that $f(C)=Z$ generates $A$ as a group. For $a\in A$ set $Z_a=\{x\in A: x=z+a, z\in Z\}.$\\
If $Z_a$ is rationally equivalent to $Z$ then $a$ is torsion point of $A$. 
\end{lemma}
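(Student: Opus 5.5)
We work with the $1$-cycle $Z$ (the push-forward $f_*[C]$, with multiplicities) and compare its class with that of the translate $Z_a=t_a(Z)$, where $t_a$ is translation by $a$. The natural invariant is the \emph{Albanese sum} of a $0$-cycle. The idea is to intersect $Z$ and $Z_a$ with a suitable family of divisors, producing $0$-cycles whose sums in $A$ must agree if $Z\sim_{\mathrm{rat}}Z_a$.

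Fix an ample divisor $\Theta$ on $A$. For $x\in A$ consider the $0$-cycle $Z\cdot\Theta_x$, where $\Theta_x=t_x(\Theta)$, and let $s(x)\in A$ be its sum. Translation gives
$$Z_a\cdot\Theta_x=t_a\bigl(Z\cdot\Theta_{x-a}\bigr),$$
so its sum is
$$s(x-a)+d\,a,\qquad d=Z\cdot\Theta>0.$$

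Rational equivalence $Z\sim Z_a$ is compatible with intersection products and with the Albanese map of $0$-cycles, so
$$s(x)=s(x-a)+d\,a\quad\text{for all }x.$$

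The function $x\mapsto s(x)$ is a morphism $A\to A$. Explicitly, $s(x)=f_*\bigl(\text{sum on }C\text{ of }f^*\Theta_x\bigr)$, and $x\mapsto f^*\Theta_x\in\mathrm{Pic}^d(C)$ is a morphism. So $s(x)=\varphi(x)+c$, with $\varphi$ a homomorphism and $c$ a constant. The identity above then reads
$$\varphi(a)=d\,a.$$

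We next identify $\varphi$ concretely: $\varphi=f_*\circ f^*\circ\phi_\Theta$ up to sign, where $\phi_\Theta:A\to\hat A$ is the polarization and $f^*:\hat A\to J(C)$, $f_*:J(C)\to A$ are the induced maps. This gives
$$(d-\varphi)(a)=0.$$
So it suffices to show that $d-\varphi$ is an \emph{isogeny}, i.e. that it has finite kernel. Then $a\in\ker(d-\varphi)$ is torsion.

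\textbf{Main obstacle.} The hardest step is proving that $d\cdot\mathrm{id}-\varphi$ has finite kernel. This is where the hypotheses that $Z$ generates $A$ and $q\geq2$ enter.

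1. Since $Z$ generates $A$, the map $f_*:J(C)\to A$ is surjective, so $f^*:\hat A\to J(C)$ has finite kernel.
2. Using the Rosati involution for $\Theta$, $\varphi$ becomes a symmetric endomorphism that is positive (semi)definite. Its analytic representation on $H^0(A,\Omega^1)^*$ is $v\mapsto\bigl(\int_C \langle\cdot,\cdot\rangle_\Theta\bigr)$-type Hermitian form.
3. The trace of this form is $d=Z\cdot\Theta$ (with suitable normalization). Concretely, $\mathrm{tr}\,\varphi_{\mathrm{an}}$ equals the degree computed by $\int_Z\omega_\Theta$.
4. Since $\varphi$ is positive definite (because $Z$ generates $A$) and $q\geq2$, every eigenvalue of $\varphi_{\mathrm{an}}$ is strictly less than its trace $d$. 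Hence $d$ is not an eigenvalue and $d-\varphi$ is an isogeny.

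I would first verify the normalization in step 3 by testing on an elliptic curve times a curve, or on a principally polarized Jacobian with $C$ embedded (where $\varphi$ should be multiplication by $1$ and $d=q$). In that example $d-\varphi=(q-1)\mathrm{id}$, which is an isogeny exactly when $q\geq2$. This matches the hypothesis and suggests the plan is correct.
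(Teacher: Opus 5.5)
Your proof is correct, but it takes a different route from the paper's.

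\textbf{The paper's route.} The paper builds an ample divisor out of $Z$ itself. It sets $D=Z+\dots+Z$ ($q-2$ summands) and $\Theta=Z+D$ (Pontryagin sum). Then $Z\sim Z_a$ gives $\Theta\sim Z_a+D=\Theta_a$, so $a\in K(\cO_A(\Theta))$, which is finite because $\Theta$ is ample.

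\textbf{Your route.} You take an arbitrary polarization and intersect $Z$ with its translates. This produces the endomorphism $\varphi$ with $s=\varphi+c$, which is essentially the classical endomorphism $\delta(Z,\Theta)$ attached to a curve and a divisor. The lemma then reduces to $d-\varphi$ being an isogeny, which follows from Rosati positivity and the trace identity.

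\textbf{The normalization you left open is right.} Work in coordinates that are unitary for the Hermitian form of $\Theta$. Compute $s^*$ on invariant $1$-forms through the incidence divisor $m^{-1}(\Theta)\subset C\times A$, where $m(p,x)=f(p)-x$. One finds
\[
s^*dz_k=\sum_m M_{km}\,dz_m,\qquad M_{km}=\tfrac{i}{2}\int_C f^*(dz_k\wedge d\bar z_m).
\]
This matrix is Hermitian and positive definite, since $f^*$ is injective on $H^0(A,\Omega^1_A)$ because $Z$ generates $A$. Its trace is $\int_C f^*\omega_\Theta=d$, equivalently $\operatorname{Tr}_r\varphi=2(Z\cdot\Theta)$. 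Only the scale of this identification matters, not its sign: a negative $\varphi$ would make $d-\varphi$ an isogeny trivially. Your Jacobian check ($\varphi=\mathrm{id}$ by Riemann's theorem, $d=q$) is consistent with this. Still, you should write the computation out rather than calibrate on an example.

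\textbf{Comparison.} The paper's argument is shorter and needs only finiteness of $K(L)$ for ample $L$. However, it takes for granted two facts: that Pontryagin sums respect rational equivalence, and that the $(q-1)$-fold sum of a generating curve is an ample divisor. The latter holds because such a divisor cannot be stable under a positive-dimensional abelian subvariety, but it needs a dimension count. Your argument works for any polarization, avoids auxiliary cycles, and gives an explicit isogeny $d-\varphi$ killing $a$. It also shows transparently where $q\ge2$ enters: the eigenvalues of $\rho_a(\varphi)$ are positive and sum to $d$, so none equals $d$. For $q=1$ one gets $\varphi=d$, and the statement indeed fails.
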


\begin{proof} 
Fix a subvariety $D$ in $A$ of dimension $q-2$ such that $\Theta=Z+D= \{x\in A: x=z+d, z\in Z,  d\in D\}$
is an ample divisor (e.g. $D=Z+Z+\dots Z$ $q-2$ times). Then, by assumption, $\Theta$ is rationally equivalent to $Z_a+D= \Theta_a$.
By setting $L=\cO_A(\Theta)$, we obtain (see for example \cite{Mum}) that $a\in K(L)$, where $K(L)$ is the kernel of the map induced by $L$ $\phi_L:A\to Pic^0(A)$ 
$$\phi_L(x)=\cO_A(\Theta_x-\Theta).$$ 
Since $L$ is ample, we have that $\phi_L$ is an isogeny and so its kernel is finite. Then, as claimed, we get that $a$ is a torsion point of $A$. 
\end{proof}

Let now $X$ be an irregular surface with $q(X)\geq2$. Let, moreover, $A$ be the Albanese variety of $X$ and $\alpha:X\rightarrow A$ be the Albanese map. Finally, let us denote by $B=\alpha(X)$ the image of $\alpha$.

\begin{proposition}
\label{PROP:maxvarirrsurf}
Assume that $B=\alpha(X)$ is a surface in $A$ and let $H$ be a line bundle on $X$. Assume that, for $Y\in|H|$, $\alpha (Y)$ generates $A$, and that the general element of $|H|$ is irreducible. Then $|H|$ does not contain a birationally isotrivial curve, whose general element is irreducible.
\end{proposition}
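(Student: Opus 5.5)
Argue by contradiction. Suppose $\Gamma\subset|H|$ is a birationally isotrivial curve whose general member $Y_t$ is irreducible. After normalizing, all $Y_t$ for $t$ in an open subset $\Gamma^0$ are birational to a fixed smooth curve $C$, the normalization of the general member. The birational maps give a family of morphisms $f_t:C\to X$ with $f_t(C)=Y_t$. Composing with the Albanese map gives morphisms $\alpha\circ f_t:C\to A$.

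\textbf{Step 1: the images in $A$ are translates.} By the universal property of the Jacobian, each $\alpha\circ f_t$ factors as $C\to J(C)\to A$: an Abel--Jacobi map followed by a homomorphism $h_t:J(C)\to A$ and a translation by $a_t\in A$. Homomorphisms of abelian varieties form a discrete set, so in a continuous family $h_t$ is constant. Up to translation, the family $\alpha\circ f_t$ is therefore constant, i.e.
\[
\alpha\circ f_t=\alpha\circ f_{t_0}+a_t .
\]
So $Z_t:=\alpha(Y_t)=Z_{a_t}$ is a translate of $Z:=\alpha(Y_{t_0})$. One must check that $t\mapsto a_t$ is non-constant. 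Otherwise $\alpha(Y_t)=Z$ for all $t\in\Gamma^0$. But the $Y_t$ cover $X$ (a positive-dimensional family of curves in the surface), so $\alpha(X)=B$ would be contained in the curve $Z$. This contradicts the assumption that $\alpha$ has two-dimensional image.

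\textbf{Step 2: rational equivalence.} All $Y_t$ lie in the same linear system $|H|$, so they are linearly equivalent on $X$. Pushing forward gives $\alpha_*[Y_t]=\alpha_*[Y_{t_0}]$ in $CH_1(A)$. Since $\alpha\circ f_t$ differs from $\alpha\circ f_{t_0}$ only by translation, the degrees of $Y_t\to Z_t$ agree, so $m[Z_{a_t}]=m[Z]$ with the same positive integer $m$. To pass from this to the hypothesis of Lemma \ref{LEM:torsion}, I would either rerun its proof with the cycle $mZ$, or pass to $\bQ$-coefficients: the proof only uses that $\Theta_a$ and $\Theta$ are algebraically equivalent in a way forcing $\phi_L(a)=0$, up to torsion. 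Either way, Lemma \ref{LEM:torsion} (or its proof) applies, because $Z$ generates $A$ by assumption, and it gives that every $a_t$ is a torsion point of $A$.

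\textbf{Step 3: conclusion.} The map $t\mapsto a_t$ is algebraic on the connected curve $\Gamma^0$, and the torsion points form a countable set. Hence $a_t$ is constant, contradicting Step 1. So no such curve $\Gamma$ exists.

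\textbf{Main obstacle.} I expect the delicate point to be Step 1: making rigorous that the birational identifications $Y_t\sim C$ can be chosen algebraically in $t$, possibly after a base change of $\Gamma$ and up to automorphisms of $C$. This is as in the proof of Theorem \ref{THM:main} via \cite{BBG}. Automorphisms of $C$ are finite in number once $g(C)\ge2$, which holds since $C$ maps non-constantly to $A$ with image generating $A$ and $q\ge2$, so they do not affect the discreteness argument. A secondary technical point is the multiplicity $m$ in Step 2, which I would handle as indicated: by using Lemma \ref{LEM:torsion} with $\bQ$-coefficients, since $\ker\phi_L$ is finite anyway.
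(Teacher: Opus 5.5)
Your proposal is correct and follows the paper's proof. Both arguments run the same way: the product map $T\times C\dashrightarrow X$ from \cite{BBG}, composition with $\alpha$ to get translates of a single curve covering $B$, pushforward of the linear equivalence in $|H|$ to rational equivalence in $A$, Lemma~\ref{LEM:torsion} to force the translations to be torsion, and a contradiction since they must vary. You also make explicit three points the paper passes over: the rigidity argument producing the translates, the non-constancy of $a_t$, and the multiplicity $m$.

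One correction of wording in your $\bQ$-coefficient variant: the lemma's argument needs $\Theta_{a}-\Theta$ to be torsion in $\Pic(A)$, i.e.\ linear equivalence up to torsion. Algebraic equivalence is not enough, since it holds for every translate.
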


\begin{proof} 
Assume by contradiction that there exists a birationally isotrivial family in $|H|$, whose general element is irreducile. Then there exist a smooth curve $C$ (the general element of such a family) and a parametrizing curve $T$, in such a way that we can define (by the result in \cite{BBG}) a rational map $\Psi: T\times C\dashrightarrow X$ such that for $t\in T$ general
$$\Psi(t\times C)=Y_t\in |H|.$$ 
Composing then with the Albanese map $\alpha$, we obtain a morphism $\varphi:T\times C\rightarrow A$ with image contained in $B$. One can see that we indeed obtain an isotrivial family of translated curves $\alpha(C)_t$ covering $B$. Since the curves $\alpha (Y)_t$ are all rationally equivalent to each other, from Lemma \ref{LEM:torsion} we get that the general point $t$ in $T$ is a torsion point of $A$ and, in particular, an element of the kernel of a fixed isogeny. In other words, $t$ belongs to a fixed finite subgroup of $A$, yielding a contradiction.
\end{proof}

Since a very ample line bundle $H$ on $X$, with $X$ as above, satisfies the assumptions of the above Proposition \ref{PROP:maxvarirrsurf}, we get in particular the following (see Theorem B from the Introduction):
\begin{corollary}
\label{COR:maxvarirrsurf}
    A smooth projective surface $X$ with maximal Albanese dimension has maximal variation.
\end{corollary}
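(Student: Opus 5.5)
The plan is to deduce the statement from Proposition \ref{PROP:maxvarirrsurf}, applied to an arbitrary very ample line bundle $H$ on $X$. By Definition \ref{DEF:maximalvariationofL}, it suffices to show that for every very ample $H$ there is no isotrivial subvariety $Z\subseteq\bP(H^0(X,H))$ through the general point of $\bP(H^0(X,H))$. If such a $Z$ exists, its induced family is isotrivial on an open subset, hence in particular birationally isotrivial. Cutting $Z$ by general hyperplanes through a general point, we may replace it by an irreducible curve whose general point is general in $\bP(H^0(X,H))$. That point corresponds to a smooth, irreducible hyperplane section: by Bertini, and using $\dim X=2$ with $H$ very ample, the general member of $|H|$ is smooth and connected. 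This produces a birationally isotrivial curve in $|H|$ whose general element is irreducible, and it remains only to check the hypotheses of Proposition \ref{PROP:maxvarirrsurf}.

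\textbf{The Albanese hypotheses.} First, $q(X)\geq 2$: maximal Albanese dimension means $\dim\alpha(X)=2$, and the Albanese image generates $A$, so $\dim A\geq 2$. Second, $B=\alpha(X)$ is a surface, again by maximal Albanese dimension. The main point is that $\alpha(Y)$ generates $A$ for general $Y\in|H|$. I would argue this via the Lefschetz hyperplane theorem. Since $Y$ is an ample smooth curve on the surface $X$, the restriction $H^1(X)\to H^1(Y)$ is injective; dually, the map $H_1(Y,\bZ)\to H_1(X,\bZ)$ has finite cokernel, and in fact it is surjective by Lefschetz for fundamental groups. Therefore $\mathrm{Alb}(Y)\to \mathrm{Alb}(X)=A$ is surjective. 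Since $\alpha(Y)$, translated to pass through the origin, generates the image of $\mathrm{Alb}(Y)$ in $A$, the curve $\alpha(Y)$ generates $A$.

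\textbf{Conclusion and main obstacle.} Proposition \ref{PROP:maxvarirrsurf} then forbids the birationally isotrivial curve constructed above, which is a contradiction. Hence $|H|$ has maximal variation, and since $H$ was an arbitrary very ample line bundle, $X$ has maximal variation. The step I expect to be the main obstacle is the reduction to a curve in $|H|$ whose general element is irreducible and lies through a general point, that is, making sure that shrinking $Z$ to a curve preserves generality. I would handle it by choosing the curve inside $Z$ through the given general point, and noting that the smooth locus $W$ is open, so a general point of the curve still gives a smooth, connected section.
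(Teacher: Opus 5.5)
Your proposal is correct and takes the same route as the paper, which also obtains the corollary by applying Proposition \ref{PROP:maxvarirrsurf} to an arbitrary very ample line bundle, simply asserting that its hypotheses hold. Your Lefschetz argument and your Bertini/connectedness check supply the verifications the paper leaves implicit: surjectivity of $H_1(Y,\mathbb{Z})\to H_1(X,\mathbb{Z})$, hence of $J(Y)\to A$, shows that $\alpha(Y)$ generates $A$, and the second check shows that the general member of $|H|$ is irreducible.
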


Finally, one can see that the same argument applies even in higher dimension:
\begin{corollary}
    A smooth projective variety with maximal Albanese dimension has maximal variation.
\end{corollary}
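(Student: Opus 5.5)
We want to prove that a smooth projective variety $X$ of dimension $n\geq 2$ with maximal Albanese dimension has maximal variation. The plan is to run the argument of Proposition \ref{PROP:maxvarirrsurf} in dimension $n$, using Lemma \ref{LEM:torsion} in a version for higher-dimensional subvarieties. Let $\alpha:X\rightarrow A$ be the Albanese map. Maximal Albanese dimension means $\dim\alpha(X)=n$, so $q=\dim A\geq n\geq2$. Let $L$ be very ample. The general $Y\in|L|$ is a smooth irreducible hypersurface, and $Y$ is ample. By Lefschetz, $H_1(Y,\bZ)\rightarrow H_1(X,\bZ)$ is surjective for $n\geq3$; for $n=2$ the surjectivity of $\pi_1$ still holds for ample curves. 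Hence $\alpha(Y)$ generates $A$, since $\alpha(X)$ does.

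Suppose, for contradiction, that an isotrivial subvariety passes through the general point of $\bP(V)$. As in the proof of Theorem \ref{THM:main}, and using \cite{BBG}, I reduce to the following situation: a curve $T$ and a dominant, generically finite rational map $\Psi:T\times Y\dashrightarrow X$ with $\Psi(\{t\}\times Y)=Y_t\in|L|$. The restricted maps $\Psi_t:Y\dashrightarrow Y_t\subset X$ are all identified with a fixed $Y$. Compose with $\alpha$. For each $t$, the map $\alpha\circ\Psi_t:Y\rightarrow A$ factors, by the universal property of the Albanese of $Y$, through $\mathrm{Alb}(Y)\rightarrow A$ up to translation. The homomorphism part lies in a discrete set, so it is constant in $t$. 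Hence $\alpha(Y_t)=\alpha(Y)+a(t)$ for a morphism $a:T\rightarrow A$. Since $\Psi$ is dominant, the translates cover $\alpha(X)$, so $a$ is nonconstant.

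Next I need a higher-dimensional version of Lemma \ref{LEM:torsion}. Let $Z=\alpha(Y)$, a subvariety of dimension $n-1$ generating $A$ (its dimension is $n-1$ because $\alpha$ is generically finite). All the $Y_t$ are linearly equivalent in $X$, so the cycles $\alpha_*[Y_t]$ are algebraically equivalent, and rationally equivalent modulo translation issues. I would avoid that subtlety by working on $X$ directly: pull back an ample $\Theta$ from $A$ and use $\alpha^*\cO_A(\Theta_{a})$ restricted to the $Y_t$.

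A cleaner route, which I would try first, is the following. Choose $D\subset A$ of dimension $q-n$ such that $Z+D$ is an ample divisor $\Theta$; one can take $D$ to be a sum of copies of a curve, as in Lemma \ref{LEM:torsion}. Then $Z_{a(t)}+D=\Theta_{a(t)}$. Rational equivalence of the $Z_{a(t)}$, transferred through the addition map, gives $\Theta_{a(t)}\sim\Theta$. Hence $a(t)-a(t_0)\in K(\cO_A(\Theta))$, which is finite. This contradicts the nonconstancy of $a$.

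The main obstacle is justifying that the $Z_{a(t)}$ are rationally equivalent, and not merely algebraically equivalent, as cycles on $A$. Algebraic equivalence holds automatically, and it is useless since translates are always algebraically equivalent. The intended source of rational equivalence is that the $Y_t$ are members of the linear system $|L|$. Push-forward $\alpha_*$ preserves rational equivalence, and $\alpha_*[Y_t]=\deg(\alpha)\,[Z_{a(t)}]$ as cycles, for $t$ general. Rational equivalence of divisors in $X$ gives rational equivalence of the pushed cycles, so $\deg(\alpha)\,(Z_{a(t)}-Z)\sim 0$. Carrying the integer factor through the construction gives $\deg(\alpha)\cdot(\Theta_{a(t)}-\Theta)\sim0$. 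Then $\deg(\alpha)\,(a(t)-a(t_0))\in K(L)$, which is still a finite set, and this yields the contradiction. Finally, every very ample $L$ satisfies the hypotheses, so $X$ has maximal variation.
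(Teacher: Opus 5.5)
Your route is the paper's: its proof of this corollary is only the remark that Proposition \ref{PROP:maxvarirrsurf} and Lemma \ref{LEM:torsion} extend to higher dimension, and your proposal fills in more detail than the paper does. Most of it is sound: the Lefschetz argument that $\alpha(Y)$ generates $A$, the Albanese-rigidity step giving $\alpha(Y_t)=Z+a(t)$ with $a$ nonconstant, and pushing forward the linearly equivalent $Y_t$ to get $m(Z_{a(t)}-Z)\sim 0$ in $CH_{n-1}(A)$. There are two harmless slips. The multiplicity is $m=\deg(Y\to Z)$, not $\deg(\alpha)$. Pushing $Z\times D$ forward along the addition map also introduces a further nonzero factor $\deg(Z\times D\to Z+D)$.

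The genuine gap is the choice of $D$ with $\Theta=Z+D$ an ample divisor ``as in Lemma \ref{LEM:torsion}''. For a generating curve the iterated sums are ample, but for $\dim Z\geq 2$ the hypothesis that $Z$ generates $A$ is not enough. Take $A=E\times A'$ and $Z=E\times C$ with $C\subset A'$ a generating curve. Then every $Z+D$ is $E$-invariant, hence not ample, and $Z_a=Z$ for all $a\in E$; so the naive higher-dimensional version of the Lemma is false. You must use the geometry of $Z=\alpha(Y)$, which is exactly what the paper's one-line extension leaves implicit.

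Here is how to close the gap. Take $D=C'+\dots+C'$ ($q-n$ copies) with $C'$ a curve generating $A$. Adding a generating curve to a proper irreducible subvariety raises the dimension by exactly one, since otherwise the subvariety would be stable under $C'-C'$, hence under $A$. So $\dim\Theta=q-1$. The same count in $A/B$ shows that if $\Theta$ is invariant under an abelian subvariety $B\neq 0$, then $\dim\pi_B(Z)=n-1-\dim B$, i.e.\ $Z$ itself is $B$-invariant. So it suffices that $Z$ has finite stabilizer, and this is where ampleness of $Y$ enters. Suppose $Z$ were $B$-invariant and set $g=\pi_B\circ\alpha\colon X\to A/B$. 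Every fiber of $g|_Y\colon Y\to\pi_B(Z)$ has positive dimension. Each ample $Y_{t'}$ meets all these fibers, so $g(Y_{t'})\supseteq\pi_B(Z)$, and by dimension $g(Y_{t'})=\pi_B(Z)$. Since the $Y_{t'}$ cover a dense subset of $X$, this gives $\alpha(X)\subseteq\pi_B^{-1}(\pi_B(Z))=Z$, contradicting $\dim\alpha(X)=n$.

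Alternatively, you can bypass ampleness. Normalize $a(t_0)=0$; by connectedness $a(T)$ lies in $K(\cO_A(\Theta))^0$. This subgroup stabilizes the effective divisor $\Theta$, hence $Z$ by the count above, so $Z_{a(t)}$ would be constant, a contradiction. With either supplement, your argument goes through.
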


\section{Hypersurfaces and Lefschetz properties}
\label{SEC:hypersurfaces}

Let us now focus on the case where $X$ is a smooth projective hypersurface of degree $d>2$ in $\bP^n$ for $n\geq3$. 
From Theorem \ref{THM:main}, we are able to give a new improvement to a question posed by Harris, Mazur, and Pandharipande in \cite{HMP}:
\begin{question}
\label{QUE:question}
    Let $X$ be a smooth hypersurface of degree $d$. Does the family of all smooth hyperplane sections of $X$ vary maximally in moduli? In other words, does the linear system $|\cO_X(1)|$ have maximal variation?
\end{question}

Indeed, one immediately gets by the adjunction formula the following (Theorem C from Introduction):
\begin{theorem}
    \label{THM:hypersurfaces}
    A smooth projective not Fano hypersurface $X$ in $\bP^n$ for $n\geq3$, i.e. any smooth hypersurface in $\bP^n$ with degree $\deg(X)\geq n+1$, has maximal variation.\\
    Moreover, if $X$ is a smooth complete intersection in $\bP^n$ of dimension at least $2$ of type $(d_1,\dots,d_r)$ such that $\sum_i d_i\geq n+1$, then $X$ has maximal variation.
\end{theorem}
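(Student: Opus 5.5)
The plan is to deduce the statement from Theorem \ref{THM:main} by checking its two Hodge-theoretic hypotheses for every very ample line bundle on $X$. Since those hypotheses involve only $X$ and not $L$, this will give maximal variation of $|L|$ for each very ample $L$, and hence maximal variation of $X$ in the sense of Definition \ref{DEF:maximalvariationofL}.

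We treat first a hypersurface $X=V(f)\subset\bP^n$ of degree $d$, with $\dim X=n-1\geq2$. Adjunction gives $K_X=\cO_X(d-n-1)$. If $d\geq n+1$, then $K_X$ is effective, so $p_g(X)=h^0(X,\cO_X(d-n-1))\geq1$. For the vanishing of $H^{n-2,0}(X)$, note that by Hodge symmetry $h^{n-2,0}=h^{0,n-2}=h^{n-2}(X,\cO_X)$. Consider the sequence
$$0\to\cO_{\bP^n}(-d)\to\cO_{\bP^n}\to\cO_X\to0.$$
Since $1\leq n-2$ and $n-1<n$, the group $H^{n-2}(\bP^n,\cO_{\bP^n})$ vanishes, and so does $H^{n-1}(\bP^n,\cO_{\bP^n}(-d))$. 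Hence $H^{n-2}(X,\cO_X)=0$. Alternatively, the Lefschetz hyperplane theorem gives $H^{n-2}(X)\cong H^{n-2}(\bP^n)$, which is of pure type $(p,p)$, so $h^{n-2,0}=0$ because $n-2>0$. Both hypotheses of Theorem \ref{THM:main} therefore hold, and the conclusion follows for every very ample $L$.

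The complete intersection case of type $(d_1,\dots,d_r)$ and dimension $m=n-r\geq2$ goes the same way. Adjunction gives $K_X=\cO_X(\sum d_i-n-1)$, which is effective when $\sum d_i\geq n+1$, so $p_g>0$. The vanishing $H^{m-1,0}(X)=0$ again follows from the Lefschetz hyperplane theorem for complete intersections: $H^k(X)\cong H^k(\bP^n)$ for $k<m$, and here $k=m-1\geq1$, so this cohomology is purely of type $(p,p)$. One can also argue with Koszul-resolution vanishing, inducting along the chain of intermediate complete intersections.

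I do not expect any serious obstacle. The only points needing care are the range conditions, namely $\dim X\geq2$ so that $m-1\geq1$, and the fact that Theorem \ref{THM:main} applies to every very ample $L$, not only $\cO_X(1)$. The latter holds because its hypotheses are intrinsic to $X$.
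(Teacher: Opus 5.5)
Your proposal is correct and takes the paper's route. The paper deduces the statement directly from Theorem~\ref{THM:main}, citing only the adjunction formula $K_X=\cO_X(\sum_i d_i-n-1)$ to get $p_g>0$; you also make explicit the vanishing $H^{\dim X-1,0}(X)=0$ (via the Lefschetz hyperplane theorem or the structure sequence), which the paper leaves implicit, and you note that the hypotheses are intrinsic to $X$, so the conclusion holds for every very ample $L$ as Definition~\ref{DEF:maximalvariationofL} requires.
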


As recalled in the Introduction, such a result was already known for smooth hypersurfaces in $\bP^n$ of degree $d\geq n+2$ by \cite{BMR}.\\

Let us conclude by considering the validity of specific Lefschetz properties for Jacobian rings of hypersurfaces. Let us start by recalling the following:

\begin{definition}
\label{DEF:jacobianring}
    Let $f\in\bK[x_0,\dots,x_n]$ be a homogeneous polynomial of degree $d$ defining a smooth hypersurface in $\bP^n$. The Jacobian ring $R_f$ of $f$ is defined as the quotient 
    $$R_f=\bK[x_0,\dots,x_n]/J_f=R_f^0\oplus\dots,\oplus R_f^N,$$
    where $J_f$ is the Jacobian ideal of $f$, i.e. the ideal generated by the first partial derivatives of $f$ with respect to the $x_i$'s.
\end{definition}

For Jacobian rings of smooth hypersurfaces (as, actually, for a larger class of algebras, namely for the standard Artinian Gorenstein algebras) one can define the following properties (the interested reader can refer for example to \cite{bookLef} or also the recent \cite{JM})):

\begin{definition}
    We say that the Jacobian ring $R_f$ of a smooth hypersurface $V(f)$ satisfies the Weak Lefschetz Property in degree $k$, if there exists an element $x \in R^1_f$ such that the multiplication map 
    $$\cdot x: R^k_f\rightarrow R^{k+1}_f$$
    is of maximal rank.
\end{definition}

In \cite{Beau2} (as in \cite{Beau}, for the case of cubic threefolds), Beauville proved the following equivalence:

\begin{proposition}
\label{PROP:beauville}
    Let $X=V(f)\subset\bP^n$ with $n\geq3$ be a smooth hypersurface of degree $d\geq3$ (or $d\geq4$ if $n=3$). The linear system $|\cO_X(1)|$ has maximal variation if and only if the Jacobian ring $R_f$ of $X$ satisfies the weak Lefschetz property in degree $d-1$.
\end{proposition}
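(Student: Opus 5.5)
The plan is to reduce both sides of the equivalence to the same vector space, using Beauville's infinitesimal criterion recalled after Theorem \ref{THM:main}: $|\cO_X(1)|$ has maximal variation if and only if $H^0(Y,T_{X|Y})=0$ for a general hyperplane section $Y=X\cap\{x=0\}$, with $x\in S^1:=\bC[x_0,\dots,x_n]_1$ general. It therefore suffices to construct a natural isomorphism
$$H^0(Y,T_{X|Y})\;\cong\;\ker\bigl(\cdot x: R_f^{d-1}\rightarrow R_f^{d}\bigr),$$
and then to check that, in the range of the statement, ``maximal rank'' for $\cdot x$ in degree $d-1$ means injectivity.

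\textbf{Step 1: describe $H^0(Y,T_{X|Y})$ as a space of linear vector fields.} Restrict the normal sequence $0\to T_X\to T_{\bP^n|X}\to\cO_X(d)\to0$ to $Y$. This gives
$$H^0(Y,T_{X|Y})=\ker\bigl(H^0(Y,T_{\bP^n|Y})\rightarrow H^0(Y,\cO_Y(d))\bigr).$$
Next use the restricted Euler sequence $0\to\cO_Y\to\cO_Y(1)^{n+1}\to T_{\bP^n|Y}\to0$. Since $Y$ is projectively normal, $H^0(\cO_Y(1))=S^1/\langle x\rangle$. When $H^1(\cO_Y)=0$, which holds for $n\geq4$, we get
$$H^0(T_{\bP^n|Y})=\bigl(S^1/\langle x\rangle\bigr)^{n+1}\big/\bC\cdot(x_0,\dots,x_n),$$
and the map to $H^0(\cO_Y(d))=S^d/(f,x)_d$ is $(v_i)\mapsto\sum_i v_i\partial_if$. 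For $n=3$, $Y$ is a curve and $H^1(\cO_Y)\neq0$, so the Euler sequence need not be surjective on global sections. I expect this to be the main obstacle. In that case I would try to show directly that a section of $T_{X|Y}$ lifting to $T_{\bP^n|Y}$ is still given by linear forms. Alternatively, I would use $d\geq4$ and the vanishing $H^0(T_{X|Y})\subset H^0(T_{\bP^3|Y})$ together with the coboundary into $H^1(\cO_Y)$, checking that the obstruction vanishes when $K_X\geq0$. That is presumably why the hypothesis changes to $d\geq4$ when $n=3$.

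\textbf{Step 2: build the map from the kernel of $\cdot x$.} Let $g\in S^{d-1}$ with $xg\in J_f$, and write $xg=\sum_i v_i\partial_if$ with $v_i\in S^1$. Then $\sum_i v_i\partial_if\equiv0$ modulo $x$, so $(v_i)$ defines a section of $T_{X|Y}$. Conversely, a section $(v_i)$ satisfies $\sum_i v_i\partial_if=af+xg$ with $a$ a constant. Euler's formula gives $af=\frac ad\sum_i x_i\partial_if$, so after modifying $(v_i)$ by a multiple of the Euler field we obtain $xg\in J_f$. One then checks that these two constructions are inverse to each other modulo the ambiguities. The ambiguity in $g$ is exactly $J_f^{d-1}$, i.e.\ $g=\sum_i w_i\partial_if$ with $w_i$ constants. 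The ambiguity in $(v_i)$ is the Euler field, multiples of $x$, and linear syzygies among the $\partial_if$. Since the partials form a regular sequence, their only syzygies are Koszul, of degree $d-1\geq2$, so there are no linear syzygies. Hence the correspondence is an isomorphism.

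\textbf{Step 3: identify maximal rank with injectivity.} It remains to see that maximal rank of $\cdot x:R_f^{d-1}\to R_f^d$ means injectivity, i.e.\ that $\dim R_f^{d-1}\leq\dim R_f^d$. Since $R_f$ is a complete intersection of $n+1$ forms of degree $d-1$, its Hilbert function is symmetric and unimodal about $(n+1)(d-2)/2$. Thus it suffices to check $d-1\leq\bigl((n+1)(d-2)-1\bigr)/2$, which holds for $n\geq3$, $d\geq3$, except in the small cases ($n=3$, $d=3$) excluded by the hypothesis. Combining the three steps yields the proposition.
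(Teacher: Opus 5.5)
Your reduction is correct, but Step 1 is left unfinished for $n=3$; the rest of the argument holds.

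\textbf{Comparison with the paper.} The paper contains no proof of this proposition. It quotes Beauville, and the only ingredient it recalls is exactly the criterion you start from: $|L|$ has maximal variation iff $H^0(Y,T_{X|Y})=0$ for general $Y$ (Proposition 1 of \cite{Beau2}). Your plan is the natural argument along these lines: identify $H^0(Y,T_{X|Y})$ with $\ker(\cdot x\colon R_f^{d-1}\to R_f^{d})$, then check that maximal rank means injectivity.

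\textbf{What already works.} Steps 2 and 3 are correct.
\begin{enumerate}
\item The $\partial_if$ form a regular sequence of forms of degree $d-1\geq2$, so there are no linear syzygies. Hence $v\mapsto g$, where $\sum_iv_i\partial_if=xg$, has kernel exactly $x\cdot\bC^{n+1}$.
\item This map is onto $\ker(\cdot x)$ because $(J_f)_d=\sum_iS^1\partial_if$.
\item In the allowed range, $2d-1\leq(n+1)(d-2)$ fails only for $(n,d)=(3,3)$.
\end{enumerate}
For $n\geq4$, Step 1 is also complete, since $H^1(\cO_Y)=0$ and $Y$ is projectively normal.

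\textbf{The missing step: Step 1 for $n=3$.} You leave this open, but it closes in a few lines using precisely $d\geq4$.

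The cokernel of $H^0(\cO_Y(1))^{\oplus4}\to H^0(T_{\bP^3|Y})$ is $\ker\bigl(H^1(\cO_Y)\to H^1(\cO_Y(1))^{\oplus4}\bigr)$, where the map is multiplication by the restricted coordinates $x_{i|Y}$. By Serre duality, injectivity of this map is equivalent to surjectivity of
\[
H^0(K_Y(-1))^{\oplus4}\longrightarrow H^0(K_Y),\qquad (s_i)\mapsto\sum_i x_{i|Y}\,s_i .
\]
Here $Y$ is a smooth plane curve of degree $d$ in the plane $\{x=0\}$, with coordinates $y_0,y_1,y_2$. So $K_Y=\cO_Y(d-3)$, and $H^0(\cO_Y(k))=\bC[y_0,y_1,y_2]_k$ for $0\leq k<d$. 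The map above is therefore the multiplication $\bC[y]_{d-4}\otimes\bC[y]_1\to\bC[y]_{d-3}$, which is surjective for $d\geq4$.

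Consequently:
\begin{enumerate}
\item the coboundary $H^0(T_{\bP^3|Y})\to H^1(\cO_Y)$ is zero;
\item every section of $T_{\bP^3|Y}$ is given by linear forms, so $H^0(T_{\bP^3|Y})=(S^1/\langle x\rangle)^{4}/\bC\cdot(x_0,\dots,x_3)$;
\item Steps 2 and 3 then apply verbatim.
\end{enumerate}
For $d=3$ one has $H^0(K_Y(-1))=0$, and a non-linear section of $T_{\bP^3|Y}$ really does appear. This is consistent with the exclusion of cubic surfaces.

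\textbf{An equivalent route on $X$.} From $0\to T_X(-1)\to T_X\to T_{X|Y}\to0$ and $H^0(T_X)=0$ one gets $H^0(T_{X|Y})=\ker\bigl(H^1(T_X(-1))\to H^1(T_X)\bigr)$. The normal and Euler sequences give $H^1(T_X(-1))\cong R_f^{d-1}$ and $R_f^d\hookrightarrow H^1(T_X)$. For $n=3$ this version needs the same condition $d\geq4$, here to kill $H^1(T_{\bP^3}(-1)_{|X})$.
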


\begin{remark}
\label{REM:cubicthreefolds}
    From the above equivalence (Proposition \ref{PROP:beauville}), Beauville observed that the maximal variation of $|\cO_X(1)|$ for a smooth cubic threefold $X=V(f)\subset\bP^4$ follows then by the validity of the Weak Lefschetz Property in degree $2$ for the Jacobian ring $R_f$, proved \cite{AR}, and see also a later proof in \cite{BFP}.\\
    Moreover, in \cite{BMR} it is proved the validity of the above Weak Lefschetz Property in degree $d-1$ for example for hypersurfaces in $\bP^n$ of degree $d\geq n+2$ (as also Lefschetz properties in other degrees), getting the corresponding maximal variation for the hyperplane linear system. Finally, also the maximal variation property with respect to the linear system $|\cO_X(1)|$ has been solved for the case of surfaces in $\bP^3$: in this case the validity of the corresponding weak Lefschetz property has indeed been proved in \cite{BMMN}. 
\end{remark}

From Theorem \ref{THM:hypersurfaces}, by Beauville's result, we get the following corollary (see Theorem D from Introduction):

\begin{corollary}[Algebraic Weak Lefschetz Property]
    \label{COR:lefschetzproperties}
    \ \\
    Let $X=V(f)\subset\bP^n$, with $n\geq3$, be a smooth hypersurface of degree $d\geq n+1$. Then the Jacobian ring $R_f=\bigoplus_{i\geq0}R_f^i$ satisfies the Weak Lefschetz Property in degree $d-1$.
\end{corollary}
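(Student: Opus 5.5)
The plan is to combine Theorem \ref{THM:hypersurfaces} with Beauville's equivalence, Proposition \ref{PROP:beauville}. Let $X=V(f)\subset\bP^n$ with $n\geq3$ and $d\geq n+1$, so $X$ is a smooth hypersurface of dimension $n-1\geq2$.

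First check that the numerical hypotheses of Proposition \ref{PROP:beauville} hold. Since $n\geq3$, we get $d\geq n+1\geq4$. This gives $d\geq3$ in general, and $d\geq4$ in the special case $n=3$.

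Next, get maximal variation of $|\cO_X(1)|$. Theorem \ref{THM:hypersurfaces} says $X$ has maximal variation, i.e.\ every very ample line bundle on $X$ does, in particular $\cO_X(1)$. For completeness, we can also argue directly from Theorem \ref{THM:main}, applied with $L=\cO_X(1)$ and $\dim X=n-1\geq 2$:
\begin{itemize}
\item By adjunction, $K_X=\cO_X(d-n-1)$ with $d-n-1\geq0$, so $p_g(X)=h^0(X,\cO_X(d-n-1))\geq1$.
\item By Lefschetz, $H^{n-2}(X)\cong H^{n-2}(\bP^n)$ for $n-2<n-1$, which is pure of type $(p,p)$. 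Hence $H^{n-2,0}(X)=0$, because $n-2>0$.
\end{itemize}
Finally, Proposition \ref{PROP:beauville} converts maximal variation of $|\cO_X(1)|$ into the Weak Lefschetz Property of $R_f$ in degree $d-1$. The case $d=n+1$ is exactly Theorem D.

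The main point to watch is making sure the notion of maximal variation used by Beauville coincides with the one in Definition \ref{DEF:maximalvariationofL}. Beauville's criterion is that $H^0(Y,T_{X|Y})=0$ for general $Y$, i.e.\ the generic finiteness of the modular map. The absence of isotrivial subvarieties through the general point gives generic finiteness of the map from $W\subset\bP(V)$ to moduli, since the fibres of that map are exactly isotrivial subvarieties. Hence the two notions agree, and the corollary follows.
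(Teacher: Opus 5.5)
Your proposal is correct and follows the paper's route: the paper also obtains the corollary by combining Theorem~\ref{THM:hypersurfaces} (maximal variation of $|\cO_X(1)|$ for $d\geq n+1$, coming from Theorem~\ref{THM:main} via the adjunction and Lefschetz checks you carry out) with Beauville's equivalence, Proposition~\ref{PROP:beauville}, whose numerical hypotheses hold because $d\geq n+1\geq 4$. Your extra reconciliation of the two notions of maximal variation is a reasonable sanity check, though the paper simply states Beauville's result directly in terms of maximal variation; your identification of $H^0(Y,T_{X|Y})=0$ with generic finiteness implicitly uses $H^0(Y,T_Y)=0$ and $H^1(X,\cO_X)=0$, and both hold here.
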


\section{Open questions}
\label{SEC:openquestions}

To conclude, we present some questions arising from the study carried out in the previous sections.

\begin{enumerate}
    \item Given a smooth projective variety $X$ and a very ample line bundle $L$ on $X$, it should be interesting to study the {\it{isotrivial locus}} of $\bP(H^0(X,L))$, i.e. the locus covered by isotrivial subvarieties of positive dimension (in the sense of Definition \ref{DEF:isotrivial}). For example, one could wonder if there exists an upper bound for its dimension, which we actually expect to be small.
    \item Another natural problem is the computation of the degree of the associated modular maps. Some examples where such a degree has been computed are plane quintics (\cite{CL}) or cubic threefolds (\cite{DPT}).
    \item It is absolutely reasonable to ask what happens for those smooth projective varieties with $p_g=0$, for example for cubic hypersurfaces of dimension greater $3$ or for surfaces of general type with $p_g=0$.
    \item To conclude, it seems natural to investigate other {\em{maximal variation problems}}, related for example to the intersection of a smooth projective variety with linear spaces of codimension at least 2. 
\end{enumerate}

\bibliographystyle{amsalpha}

 \end{document}